\crefname{hypothesis}{Hypothesis}{Hypotheses}
\Crefname{ALC@unique}{Line}{Lines}
\newcommand{\leqnomode}{\tagsleft@true\let\veqno\@@leqno}
\newcommand{\reqnomode}{\tagsleft@false\let\veqno\@@eqno}
\DeclareMathOperator*{\du}{d\!}
\DeclareMathOperator{\bu}{{\boldsymbol{u}}}
\DeclareMathOperator{\blf}{\boldsymbol{f}}
\DeclareMathOperator{\bw}{\boldsymbol{w}}
\DeclareMathOperator{\bv}{\boldsymbol{v}}
\DeclareMathOperator{\bn}{\boldsymbol{n}}
\DeclareMathOperator{\bphi}{\boldsymbol{\varphi}}
\DeclareMathOperator{\dive}{\mathrm{div}}
\DeclareMathOperator{\ws}{\mathrel{\ensurestackMath{\stackon[1pt]{\rightharpoonup}{\scriptstyle\ast}}}}
\newlength\normalparindent
\colorlet{texcscolor}{blue!50!black}
\colorlet{texemcolor}{red!70!black}
\colorlet{texpreamble}{red!70!black}
\colorlet{codebackground}{black!25!white!25}
\lstdefinestyle{siamlatex}{%
  style=tcblatex,
  texcsstyle=*\color{texcscolor},
  texcsstyle=[2]\color{texemcolor},
  keywordstyle=[2]\color{texemcolor},
  moretexcs={cref,Cref,maketitle,mathcal,text,headers,email,url},
}
\DeclareTotalTCBox{\code}{ v O{} }
{ 
  fontupper=\ttfamily\color{black},
  nobeforeafter,
  tcbox raise base,
  colback=codebackground,colframe=white,
  top=0pt,bottom=0pt,left=0mm,right=0mm,
  leftrule=0pt,rightrule=0pt,toprule=0mm,bottomrule=0mm,
  boxsep=0.5mm,
  #2}{#1}
\patchcmd\newpage{\vfil}{}{}{}
\title{A Convective Boundary Condition for the Navier-Stokes Equations: Existence Analysis and Numerical Implementations\thanks{
\funding{This work is supported by JSPS KAKENHI Grant Numbers JP18H01135, JP20H01823, JP20KK0058 and JP21H04431, and JST CREST Grant Number JPMJCR2014 for {\bf HN}; and by the Japanese Government (MEXT) Scholarship for {\bf JSS}.}}}
\author{John Sebastian H. Simon\thanks{Division of Mathematical and Physical Sciences, 
              Graduate School of Natural Science and Technology, 
              Kanazawa University, Kanazawa 920-1192, Japan  (\email{john.simon@stu.kanazawa-u.ac.jp},\email{jhsimon1729@gmail.com}).}
\and Hirofumi Notsu\thanks{Faculty of Mathematics and Physics, Kanazawa University, Kanazawa 920-1192, Japan (\email{notsu@se.kanazawa-u.ac.jp}).}}
\begin{document}
\maketitle

\begin{tcbverbatimwrite}{tmp_\jobname_abstract.tex}
\begin{abstract}
 Due to computational complexity, fluid flow problems are mostly defined on a bounded domain. Hence, capturing fluid outflow calls for imposing an appropriate condition on the boundary where the said outflow is prescribed. Usually, the Neumann-type boundary condition called do-nothing condition is the go-to description for such outflow phenomenon However, such condition does not ensure an energy estimate for the Navier--Stokes equations - let alone establish the existence of solutions. In this paper, we analyze a convective boundary condition that will capture outflow and establish the existence of solutions to the governing equation. We shall show existence and uniqueness results for systems with mixed boundary conditions - Dirichlet condition and the convective boundary condition. The first system is a stationary equation where the Dirichlet condition is purely homogeneous, the other is where an input function is prescribed, and lastly a dynamic system with prescribed input function. We end by showing numerical examples to illustrate the difference between the current outflow condition and the usual do-nothing condition.
\end{abstract}

\begin{keywords}
  Navier-Stokes equations, artificial boundary condition
\end{keywords}

\begin{AMS}
  76D05, 35M12, 49K20
\end{AMS}
\end{tcbverbatimwrite}
\input{tmp_\jobname_abstract.tex}

\section{Introduction}

Most simulations of fluid flow call for imposing inflow and outflow conditions.  A good example is in modeling blood flow in arteries, or in modeling the generation of Karman vortex in a fluid flowing through a channel with an obstacle. Nevertheless, both scenarios require an input boundary and an outflow boundary upon which appropriate boundary conditions are supposed to be considered. For the input condition, a non-homogeneous Dirichlet condition may be imposed. The outflow on the other hand, is where the crux of the matter lies. Usually, the outflow profile is described by using the so-called {\it do-nothing} condition which is written by letting the product of the stress tensor and the outward unit normal vector on the outflow boundary be equal to zero. However, due to the nonlinear nature of the Navier-Stokes equations the aforementioned outflow condition is insufficient to ensure a good energy estimate for the solution -- as well as prove its existence.

To bypass this issue, several authors proposed using artificial boundary conditions that will ensure that the governing system is well-posed at the same time capture an outflow behavior on the boundary.  Boyer F.  and Fabrie, P. \cite{boyer2007}, and Brunueau, C.-H. and Fabrie, P. \cite{bruneau1996} proposed several boundary conditions that take into account the nonlinearity of the Navier-Stokes equations. These boundary conditions, however, need to be formulated carefully so as to ensure coercivity of the left hand side of the weak formulation of the system. Among the conditions proposed is of the following form 
	\begin{align*}
		\sigma({\bu},p){\bn} = \frac{1}{2}({\bu}\cdot{\bn})_{-}{\bu},
	\end{align*}
where ${\bu}$ and $p$ are the fluid velocity and pressure, respectively, $\sigma({\bu},p) = 2\nu D(\bu) - pI$ corresponds to the fluid stress tensor, $D({\bu}) = (\nabla{\bu} + \nabla{\bu}^\top)/2$ denotes the deformation rate tensor, $\nu>0$ corresponds to fluid viscosity, ${\bn}$ is the outward unit normal vector on the outflow boundary, and $({\bu}\cdot{\bn})_{-}$ is the negative part of the product ${\bu}\cdot{\bn}$, i.e., 
\begin{align*}
	({\bu}\cdot{\bn})_{-} = \left\{ \begin{aligned} &0&&\text{    when }{\bu}\cdot{\bn}\ge 0,\\
	&{\bu}\cdot{\bn}&&\text{    otherwise}. \end{aligned} \right.
\end{align*}
This condition is also called the directional do-nothing condition by Braack, M. and Mucha, P.  in \cite{braack2014} where they considered homogeneous Dirichlet conditions on the boundaries of the domain excluding the outflow boundary. 

As pointed out in \cite{braack2014} the directional do-nothing reflects the same outflow profile as the usual do-nothing condition for Poiseuille flows, but poses an enhanced stability as compared to the usual method. However, the directional do-nothing condition presents challenges when utilized as governing states to fluid control problems especially to the corresponding adjoint system of the optimization problem.  Nevertheless, such condition have been applied to several physical phenomena, such as ferrofluid flows \cite{ELSHEHABEY2020}, two-phase flows with a saturated version of the discontinuous nature of $(\cdot)_{-}$ \cite{DONG2014}, and a Signorini type unilateral outflow \cite{zhou2016}, to name a few. Such condition is also considered as a backflow stabilization technique, which is important for applications such as hemodynamics \cite{ARBIA2016,Formaggia2007}.

Another boundary condition that has been considered for outflow behavior is by taking into account the total pressure, which was first introduced in \cite{begue1987}. The formulation of this boundary condition is carried out in a more {\it natural} way than the directional do-nothing condition, by which we capitalize on the fact that 
\[
({\bu}\cdot\nabla){\bu} = (\nabla\times{\bu})\times{\bu} + \frac{1}{2}\nabla|{\bu}|^2.
\]
The boundary condition can then be written as $\left(\sigma({\bu},p)-  \frac{1}{2}|{\bu}|^2I\right){\bn} =0$. This particular boundary condition has been illustrated to feature a good outflow behaviour \cite{heywood1992}. The existence and regularity of solutions to the Navier--Stokes equations governed with such condition has been studied in \cite{nowakowski2020}. Due to its physical relevance on outflow phenomenon, the said condition has been applied for example to hemodynamics, see \cite{ARBIA2016,Formaggia2007,Incaux2014} among others, and to some homogenization problems, among them is \cite{Carbou2008}.


In this paper, we shall discuss another natural boundary condition that behaves in the same way as that of the total pressure approach but can be interpreted as boundary convection.  In particular, we shall consider the following outflow condition
\begin{align*}
	\sigma({\bu},p){\bn} = \frac{1}{2}({\bu}\otimes{\bu}){\bn},
\end{align*}
where the tensor product $\otimes$ is defined as ${\bu}\otimes{\bv} = [u_iv_j]_{i,j=1}^2$, with ${\bu} = (u_1,u_2)$, and ${\bv} = (v_1,v_2)$. We note that the condition of interest -- which we shall call a {\it convective boundary condition} (CBC) -- comes out mathematically natural in the convection term as shown in the computations of Brunueau, C.-H. and Fabrie, P. \cite{bruneau1996}. Furthermore, this condition ensures one of the solvability of the linearized and adjoint system if one wishes to consider an outflow condition for a fluid control problem. We also mention that a similar Robin type boundary condition has been mentioned in \cite{gresho2000} but no analysis has been presented for such condition.

This paper is organized as follows: in the next section, we shall discuss some preliminary concepts such as functional spaces and some notations that will be used. Section \ref{sec3} will be dedicated to the analysis of the stationary problem, this includes homogeneous Dirichlet boundary condition on the boundaries of the domain except the outflow boundary and a non-homogeneous Dirichlet data on a specified non-outflow boundary. The analysis of the time dependent version of the problem discussed in Section 3 will by done on Section \ref{sec4}. We provide some numerical examples on Section \ref{sec5} and we finish the paper by providing some concluding remarks on the last section.


\section{Preliminaries}\label{sec2}

For a Banach space $X$, its dual is denoted as $X^*$ and their pairing is denoted as $\langle x^*,x\rangle_{X^*\times X}$ for $x^*\in X^*$ and $x\in X$. For a domain $\Omega\subset \mathbb{R}^2$,  and $d = 1,2$ we denote by $L^p(\Omega;\mathbb{R}^d)$ for $p\ge 1$ the space of p-Lebesgue functions, and we shall also use the standard notation for Sobolev spaces as $W^{s,p}(\Omega;\mathbb{R}^d)$ for $p\ge 1$ and $k\ge 0$, with $W^{s,2}(\Omega) = H^s(\Omega;\mathbb{R}^d).$ 
Let $\Gamma_0$ be a portion of the boundary with non-zero measure, we consider the following spaces for Neumann/Robin-type boundary conditions:
\begin{align*}
	\mathcal{W}(\Omega;\mathbb{R}^d) & : = \{\bphi\in C^\infty(\Omega;\mathbb{R}^d): \varphi=0\text{ on a neighborhood of }\Gamma_0 \},\\
	H_{\Gamma_0}^s(\Omega;\mathbb{R}^d) &:=  \overline{\mathcal{W}(\Omega;\mathbb{R}^d)}^{\|\cdot\|_{H^s(\Omega;\mathbb{R}^d)}}.
\end{align*}

To take into account the divergence-free property of the fluid velocity we shall utilize the following solenoidal spaces:
\begin{align*}
	W & : = \{ {\bphi}\in \mathcal{W}(\Omega;\mathbb{R}^2): \nabla\cdot{\bphi}=0\text{ in }\Omega \},\\
	V & := \{ {\bphi}\in H_{\Gamma_0}^1(\Omega;\mathbb{R}^2): \nabla\cdot{\bphi}=0\text{ in }\Omega \},\\
	H & := \{ {\bphi}\in L^2(\Omega;\mathbb{R}^2): \nabla\cdot{\bphi} = 0\text{ in }\Omega, {\bphi}\cdot{\bn} = 0\text{ on }\Gamma_0 \}.
\end{align*}
The spaces $V$ and $H$ satisfy the Gelfand triple property, i.e., $V\hookrightarrow H\hookrightarrow V^*$, with the first embedding known to be dense and continuous. Furthermore, $W$ is dense in $V$. 

For an interval $I\subset \mathbb{R}$ and a real Banach space $X$, we shall consider the space of continuous functions from $I$ to $X$ denoted by $C(I;X)$ with its usual norm $\sup_{t\in I}\|u(t)\|_X$.  We shall also consider the Bochner spaces $L^p(I;X)$ for $p\ge 1$ with the norms
	\begin{align*}
		\|u\|_{L^p(I;X)} = \left\{ \begin{aligned} 
		&\mathrm{ess}\sup_{t\in I}\|u(t)\|_X &&\text{for }p=\infty,\\
		&\left(\int_I \|u(t)\|_X^p \du t \right)^{\!1/p} &&\text{otherwise}.
		\end{aligned} \right.
	\end{align*}
Lastly, thanks to Aubin-Lions Lemma \cite{simon1986} , the space $W^p_I(V):= \{{\bu} \in L^2(I,V); \partial_t{\bu}\in L^p(I;{V}^*) \}$ is compactly embedded to $L^p(I;H)$, and we have the following inclusion $W^2_I(V) \subset C(\overline{I};H)$. In this space, we shall also consider the norm 
\begin{align*}
	\|{\bu}\|_{W^p_I(V)} = \|\bu \|_{L^2(I,V)} + \|\partial_t{\bu}\|_{L^p(I;{V}^*)}.
\end{align*}
In the subsequent sections we shall use the notation $W^p(V)$ if the interval $I=(0,T)$ is used.

The following operators will also be useful to simplify the analyses that will be done in the subsequent sections. The operator $a_0:H^1(\Omega;\mathbb{R}^2)\times H^1(\Omega;\mathbb{R}^2)\to \mathbb{R}$ is defined as 
\begin{align*}
	a_0({\bu},{\bv}) = 2\int_\Omega D({\bu}):D({\bv})\du x.
\end{align*}
Meanwhile, we shall also consider the trilinear form $a_1:H^1(\Omega;\mathbb{R}^2)\times H^1(\Omega;\mathbb{R}^2)\times H^1(\Omega;\mathbb{R}^2)\to \mathbb{R}$ given by
\begin{align*}
	a_1({\bw};{\bu},{\bv}) = \int_\Omega [({\bw}\cdot\nabla){\bu}]\cdot{\bv}\du x - \frac{1}{2}\int_{\partial\Omega\backslash\Gamma_0} ({\bw}\cdot{\bn})({\bu}\cdot{\bv}) \du s.
\end{align*}

The following properties of $a_1$ are crucial for showing existence of solutions to the systems which we shall be studying.  
\begin{lemma}
Let ${\bu},{\bv},{\bw}\in V$, then the following identities hold true:
\begin{enumerate}
	\item[(i)] $\displaystyle a_1({\bw};{\bu},{\bv}) + a_1({\bw};{\bv},{\bu}) =0$;
	\item[(ii)] $\displaystyle a_1({\bv};{\bu},{\bu}) = 0$.
\end{enumerate}
Furthermore, the first part of the trilinear form $a_1(\cdot;\cdot,\cdot)$ satisfies the inequality
\begin{align}
\left| \int_\Omega [({\bw}\cdot\nabla){\bu}]\cdot{\bv}\du x \right| \le c\|{\bw}\|_H^{1/2}\|{\bw}\|_V^{1/2}\|{\bu}\|_V\|{\bv}\|_H^{1/2}\|{\bv}\|_V^{1/2}.
\label{estimate:trilinear}
\end{align}
\label{lemma:trilinearpropoerties}
\end{lemma}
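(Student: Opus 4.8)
The plan is to establish the two algebraic identities by an integration-by-parts argument performed first on the dense subspace $W$, where the divergence theorem applies without subtlety, and then to extend them to all of $V$ by density; the estimate \eqref{estimate:trilinear} will follow from H\"older's inequality combined with the two-dimensional Ladyzhenskaya interpolation inequality.

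First I would take ${\bu},{\bv},{\bw}\in W$, so that all functions are smooth and compactly supported away from $\Gamma_0$. Writing the convective term in components and integrating by parts gives
\begin{align*}
\int_\Omega [({\bw}\cdot\nabla){\bu}]\cdot{\bv}\du x = -\int_\Omega (\nabla\cdot{\bw})({\bu}\cdot{\bv})\du x - \int_\Omega [({\bw}\cdot\nabla){\bv}]\cdot{\bu}\du x + \int_{\partial\Omega}({\bw}\cdot{\bn})({\bu}\cdot{\bv})\du s.
\end{align*}
Since $\nabla\cdot{\bw}=0$ the first term on the right vanishes, and because ${\bw}$ is supported away from $\Gamma_0$ the boundary integral reduces to one over $\partial\Omega\backslash\Gamma_0$. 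Substituting this relation into the definition of $a_1$ yields
\begin{align*}
a_1({\bw};{\bu},{\bv}) + a_1({\bw};{\bv},{\bu}) = \int_{\partial\Omega\backslash\Gamma_0}({\bw}\cdot{\bn})({\bu}\cdot{\bv})\du s - \int_{\partial\Omega\backslash\Gamma_0}({\bw}\cdot{\bn})({\bu}\cdot{\bv})\du s = 0,
\end{align*}
which is (i). Identity (ii) then follows immediately by placing ${\bv}$ in the first slot and ${\bu}$ in both remaining slots, so that (i) reads $2a_1({\bv};{\bu},{\bu})=0$. To pass from $W$ to $V$, I would invoke the density of $W$ in $V$ stated in Section~\ref{sec2} together with the continuity of $a_1$ on $V\times V\times V$; the latter rests on \eqref{estimate:trilinear} for the volume part and on trace estimates (the traces of $H^1(\Omega)$ functions lie in every $L^p(\partial\Omega)$, $p<\infty$, in two dimensions) for the boundary part, so both identities persist under the limit.

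For the estimate \eqref{estimate:trilinear} I would apply H\"older's inequality with exponents $4,2,4$ to obtain
\begin{align*}
\left|\int_\Omega [({\bw}\cdot\nabla){\bu}]\cdot{\bv}\du x\right| \le \|{\bw}\|_{L^4(\Omega)}\,\|\nabla{\bu}\|_{L^2(\Omega)}\,\|{\bv}\|_{L^4(\Omega)},
\end{align*}
and then control the two $L^4$ factors by the Ladyzhenskaya inequality $\|f\|_{L^4(\Omega)}\le c\|f\|_{L^2(\Omega)}^{1/2}\|f\|_{H^1(\Omega)}^{1/2}$, valid in dimension two. Identifying $\|\cdot\|_{L^2(\Omega)}$ with $\|\cdot\|_H$, bounding $\|\cdot\|_{H^1(\Omega)}$ by a multiple of $\|\cdot\|_V$, and using $\|\nabla{\bu}\|_{L^2(\Omega)}\le\|{\bu}\|_V$ produces exactly the claimed bound.

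I expect the only delicate points to be bookkeeping rather than conceptual: making sure the surface integrals are well defined on $V$ and that the support condition near $\Gamma_0$ is correctly transported through the density argument, so that the boundary contribution to $a_1$ remains restricted to $\partial\Omega\backslash\Gamma_0$ in the limit. The interpolation step is routine once one recalls that the $L^4$ embedding with this exponent split is special to the planar setting, which is precisely why the analysis is carried out for $\Omega\subset\mathbb{R}^2$.
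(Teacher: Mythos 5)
Your proof is correct. The paper actually states Lemma~\ref{lemma:trilinearpropoerties} without proof, so there is no argument in the text to compare against; what you give is the standard one, and it is sound: integration by parts on the dense subspace $W$ (where $\nabla\cdot{\bw}=0$ kills the divergence term and the support condition confines the surface integral to $\partial\Omega\backslash\Gamma_0$), cancellation of the boundary contribution against the $-\tfrac12\int_{\partial\Omega\backslash\Gamma_0}$ term in the definition of $a_1$, passage to $V$ by density using the continuity of both the volume part (via \eqref{estimate:trilinear}) and the boundary part (via trace embeddings), and H\"older $(4,2,4)$ plus the two-dimensional Ladyzhenskaya inequality for the estimate. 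The only step you leave implicit is that bounding $\|\cdot\|_{H^1(\Omega;\mathbb{R}^2)}$ by a multiple of $\|{\cdot}\|_V=a_0(\cdot,\cdot)^{1/2}$ requires Korn's and Poincar\'e's inequalities for functions vanishing on the positive-measure set $\Gamma_0$; this is standard and worth a one-line remark, but it is not a gap.
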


\section{Stationary Problem}\label{sec3}

In this section we shall analyze the stationary Navier-Stokes equations upon which the convective boundary condition is imposed. Although we shall consider mixed boundary conditions, this section is divided into two: first, we consider a system where a homogeneous Dirichlet and the convective boundary conditions are imposed on two parts of the boundary of the domain; and the other is where we consider a Dirichlet data on a portion of the non-outflow boundary.

\subsection{Homogeneous Dirichlet}
\label{section:2.1}
Let $\Omega\subset\mathbb{R}^2$ be a bounded domain with boundary $\partial\Omega$. Let us also consider partition of the boundary denoted by $\Gamma_0$ and $\Gamma_1$, both of which have non-zero measures. We shall show the existence of a velocity field $\bu:\Omega\to\mathbb{R}^2$ and pressure $p:\Omega\to\mathbb{R}$ that satisfy the following Navier-Stokes equations
\begin{align}
	\left\{
		\begin{aligned}
			-\nabla\cdot\sigma({\bu},p) + ({\bu}\cdot\nabla){\bu} & = {\blf} &&\text{in }\Omega,\\
			\nabla\cdot{\bu}& = 0 &&\text{in }\Omega,\\
			{\bu} & = 0 &&\text{on }\Gamma_0,\\
			\sigma({\bu},p){\bn} & = \frac{1}{2}({\bu}\otimes{\bu}){\bn} &&\text{on }\Gamma_1.
		\end{aligned}
	\right.
	\label{system:hdirichlet}
\end{align}

The variational form of \eqref{system:hdirichlet} can be written -- by virtue of Green's identities -- as follows: {\it Find } ${\bu}\in V$ {\it that solves the equation }
\begin{align}
	\nu a_0({\bu},{\bv}) + a_1({\bu};{\bu},{\bv})   = \langle{\blf},{\bv}\rangle_{V^*\times V}\quad \forall {\bv}\in V.
	\label{weak:hdirichlet}
\end{align}

Any function ${\bu}\in V$ that satisfies \eqref{weak:hdirichlet} is called a {\it weak solution} of the Navier-Stokes equation \eqref{system:hdirichlet}.  To establish the existence of the weak solution we shall utilize Theorem 1.2 in \cite[Section IV p. 280]{girault1986}. Meaning to say, we shall show that the left hand side of \eqref{weak:hdirichlet} is coercive, and that the map 
\[
{\bu}\mapsto \nu a_0({\bu},{\bv}) + a_1({\bu};{\bu},{\bv}) 
\]
 is sequentially weakly continuous for all ${\bv}\in V$, that is, if $\{{\bu}_n\}\subset V$ is a sequence that converges weakly to ${\bu}\in V$, then 
\[
\lim_{n\to\infty}\nu a_0({\bu}_n,{\bv}) + a_1({\bu}_n;{\bu}_n,{\bv}) = \nu a_0({\bu},{\bv}) + a_1({\bu};{\bu},{\bv}) .
\]

\begin{theorem}
 Let $\Omega\subset \mathbb{R}^2$ be of class $\mathcal{C}^1$, and suppose that the source function satisfies ${\blf}\in V^*$. Then, there exists an element ${\bu}\in V$ that satisfies \eqref{weak:hdirichlet}, such that 
 \begin{align}
 \|{\bu}\|_{V} \le c\|{\blf}\|_{V^*}
 \label{energy:hdirichlet}
 \end{align}
  for some constant $c>0$.
 \label{theorem:wphdirichlet}
\end{theorem}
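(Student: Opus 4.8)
The plan is to verify the two hypotheses of Theorem 1.2 in \cite[Section IV p.\ 280]{girault1986} -- coercivity of the full form on the left of \eqref{weak:hdirichlet} and sequential weak continuity of the associated nonlinear map -- and then to read off the energy bound \eqref{energy:hdirichlet} by testing the equation with the solution itself. The $\mathcal{C}^1$ regularity of $\Omega$ will be used in three places: to justify a Korn inequality, to guarantee the trace theorem, and to secure the relevant Rellich--Kondrachov compactness.

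First I would establish coercivity. Taking ${\bv}={\bu}$ in the left-hand side of \eqref{weak:hdirichlet} and invoking part (ii) of Lemma~\ref{lemma:trilinearpropoerties}, the trilinear term vanishes, so that $\nu a_0({\bu},{\bu})+a_1({\bu};{\bu},{\bu}) = 2\nu\|D({\bu})\|_{L^2(\Omega)}^2$. Since ${\bu}=0$ on $\Gamma_0$ and $\Gamma_0$ has positive measure, a Korn-type inequality yields $\|D({\bu})\|_{L^2(\Omega)}^2 \ge c_K\|{\bu}\|_{V}^2$, whence $\nu a_0({\bu},{\bu}) \ge 2\nu c_K\|{\bu}\|_V^2$, which is the required coercivity.

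The principal obstacle is the sequential weak continuity of ${\bu}\mapsto \nu a_0({\bu},{\bv})+a_1({\bu};{\bu},{\bv})$. Let ${\bu}_n\rightharpoonup{\bu}$ in $V$. The linear term $a_0({\bu}_n,{\bv})$ passes to the limit at once, since $a_0(\cdot,{\bv})$ is a bounded linear functional; the difficulty sits in the trilinear term, which carries both a volume integral and the boundary integral produced by the convective boundary condition. For the volume part I would first use the antisymmetry of part (i) of Lemma~\ref{lemma:trilinearpropoerties} to write $a_1({\bu}_n;{\bu}_n,{\bv}) = -a_1({\bu}_n;{\bv},{\bu}_n)$, moving the gradient off ${\bu}_n$ and onto the fixed ${\bv}$. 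Because $\Omega\subset\mathbb{R}^2$ is bounded with $\mathcal{C}^1$ boundary, the embedding $H^1(\Omega)\hookrightarrow L^4(\Omega)$ is compact, so ${\bu}_n\to{\bu}$ strongly in $L^4(\Omega;\mathbb{R}^2)$; then the products $u_{n,i}u_{n,j}\to u_iu_j$ strongly in $L^2(\Omega)$ pair against the fixed factor $\partial_i v_j\in L^2(\Omega)$, and the volume integral converges. For the boundary integral I would exploit the compactness of the trace map, so that ${\bu}_n\to{\bu}$ strongly in $L^4(\partial\Omega\setminus\Gamma_0;\mathbb{R}^2)$; as the integrand $({\bu}_n\cdot{\bn})({\bv}\cdot{\bu}_n)$ is quadratic in the traces of ${\bu}_n$ with ${\bv}$ fixed, this term converges as well. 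Combining the two pieces gives weak continuity, and Theorem 1.2 of \cite{girault1986} then furnishes a solution ${\bu}\in V$ of \eqref{weak:hdirichlet}.

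Finally, testing \eqref{weak:hdirichlet} with ${\bv}={\bu}$ and again using $a_1({\bu};{\bu},{\bu})=0$ reduces the identity to $2\nu\|D({\bu})\|_{L^2(\Omega)}^2 = \langle{\blf},{\bu}\rangle_{V^*\times V}$. Bounding the right-hand side by $\|{\blf}\|_{V^*}\|{\bu}\|_V$ and applying the Korn inequality on the left gives $2\nu c_K\|{\bu}\|_V^2 \le \|{\blf}\|_{V^*}\|{\bu}\|_V$, which yields \eqref{energy:hdirichlet} with $c=(2\nu c_K)^{-1}$. I expect the boundary integral in the weak-continuity step to be the most delicate point, since it is the genuinely new feature introduced by the convective boundary condition and hinges on the compactness of the trace operator rather than on interior compactness alone.
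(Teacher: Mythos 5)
Your proposal is correct and follows essentially the same route as the paper: coercivity via part (ii) of Lemma~\ref{lemma:trilinearpropoerties}, sequential weak continuity via the antisymmetry of $a_1$ combined with Rellich--Kondrachov compactness in the interior and on the trace, and the energy bound by testing with ${\bu}$ itself. The only cosmetic differences are that you invoke Korn's inequality explicitly where the paper absorbs it into the definition of $\|\cdot\|_V$, and you pass to the limit in the volume term via strong $L^4(\Omega)$ convergence where the paper uses strong convergence in $H$ together with the interpolation estimate \eqref{estimate:trilinear}.
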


\begin{proof}
	We start by showing that the left hand side of \eqref{weak:hdirichlet} is coercive. We do this by utilizing Lemma \ref{lemma:trilinearpropoerties} (ii). Indeed, for any ${\bu}\in V$, we have
	\begin{align*}
		\nu a_0({\bu},{\bu}) + a_1({\bu};{\bu},{\bu}) = 2\nu \int_\Omega D({\bu}):D({\bu})\du x = \nu \|{\bu}\|^2_V.
	\end{align*}

	As for the second property, let ${\bv}\in V$ and $\{{\bu}_n \}\subset V$ be a sequence weakly converging to ${\bu}\in V$.  One can easily show that $a_0({\bu}_n,{\bv})\to a_0({\bu},{\bv})$, so what remains for us to show is that 
	\begin{align}
		a_1({\bu}_n;{\bu}_n,{\bv})\to a_1({\bu};{\bu},{\bv}).
		\label{convergence:trilinear}
	\end{align}
Let us first point out that due to Rellich-Kondrachov embeding theorem (see, e.g. \cite[Part I, Theorem 6.3]{adams2003}),  we get the following convergences 
	\begin{align}
		\left\{
			\begin{aligned}
				& {\bu}_n\to {\bu} &&\text{in }H,\\
				& {\bu}_n\to {\bu} &&\text{in }L^q(\Gamma_1;\mathbb{R}^2) \ (q\ge 2).
			\end{aligned}
		\right.
		\label{convergence:rellich}
	\end{align}
Using the definition of the operator $a_1(\cdot;\cdot,\cdot)$, and Lemma \ref{lemma:trilinearpropoerties} we get the following estimate.
	\begin{align*}
		|a_1({\bu}_n;{\bu}_n,{\bv}) -  a_1({\bu};{\bu},{\bv})| = &\,  |a_1({\bu}_n;{\bv},{\bu}_n)- a_1({\bu};{\bv},{\bu})|\\
		 \le&   |a_1({\bu}_n-{\bu};{\bv},{\bu}_n)| +  |a_1({\bu};{\bv},{\bu}_n - {\bu})|\\
		= & \left| \int_\Omega [(({\bu}_n-{\bu})\cdot\nabla){\bv}]\cdot{\bu}_n\du x - \frac{1}{2}\int_{\Gamma_1} (({\bu}_n-{\bu})\cdot{\bn})({\bv}\cdot{\bu}_n) \du s\right| \\
		& \, + \left| \int_\Omega [({\bu}\cdot\nabla){\bv}]\cdot({\bu}_n-{\bu})\du x - \frac{1}{2}\int_{\Gamma_1} ({\bu}\cdot{\bn})({\bv}\cdot({\bu}_n-{\bu})) \du s\right|\\
		\le &\, c\| {\bu}_n-{\bu}\|_H^{1/2}(\|{\bu_n}\|_V + \|{\bu}\|_V)^{1/2}\|{\bv}\|_V\|{\bu}_n\|_V\\
		&\, + c\|{\bu}\|_V\|{\bv}\|_V\| {\bu}_n-{\bu}\|_H^{1/2}(\|{\bu_n}\|_V + \|{\bu}\|_V)^{1/2}\\
		&\, + \frac{c}{2}\|{\bu}_n-{\bu} \|_{L^2(\Gamma_1;\mathbb{R}^2)}\|{\bv}\|_{L^4(\Gamma_1;\mathbb{R}^2)}(\|{\bu}_n \|_{L^4(\Gamma_1;\mathbb{R}^2)}+\|{\bu} \|_{L^4(\Gamma_1;\mathbb{R}^2)}).
	\end{align*}

From \eqref{convergence:rellich}, the right hand side of the previous computation converges to zero, which proves the sequential weak continuity asked, and hence the existence of an element ${\bu}\in V$ that solves \eqref{weak:hdirichlet}.
	
	Lastly, by letting ${\bv} = {\bu} \in V$ in \eqref{weak:hdirichlet}, we get
	\begin{align*}
		\nu \|{\bu}\|_V^2 = \langle{\blf},{\bu}\rangle_{V^*\times V}\le c_1\|{\blf}\|_{V^*}\|{\bu}\|_V,
	\end{align*}
	where $c_1>0$ is the Poincar{\'e} constant.
	Therefore, we get the energy estimate mentioned in Theorem \ref{system:hdirichlet} with $c=c_1/\nu$.
\end{proof}

For the uniqueness of the weak solution, let us introduce the following notation which is the norm of the trilinear form $a_1$.
\begin{align}
	\mathcal{B} = \sup_{{\bu},{\bv},{\bw}\in V}\frac{a_1({\bu};{\bv},{\bw})}{\|{\bu}\|_V\|{\bv}\|_V\|{\bw}\|_V}.
	\label{norm:trilinear}
\end{align}
The existence of such quantity can be established due to the continuity of the trilinear form in $V\times V\times V$ (see equation \eqref{estimate:trilinear} in Lemma \ref{lemma:trilinearpropoerties}).

\begin{theorem}
	Suppose that the assumptions in Theorem \ref{theorem:wphdirichlet} hold, and that the the following estimate holds:
	\begin{align}
		\mathcal{B}\|{\blf}\|_{V^*}c_1 < \nu^2,
		\label{estimate:uniquenesshdirichlet}
	\end{align}
	where $c_1>0$ is the same constant as in the proof of Theorem \ref{theorem:wphdirichlet}.
	Then the solution ${\bu}\in V$ to \eqref{weak:hdirichlet} is unique.
	\label{theorem:uniquenesshdirichlet}
\end{theorem}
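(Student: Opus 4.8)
The plan is to argue by the standard energy method for stationary Navier--Stokes uniqueness, exploiting the antisymmetry of $a_1$ recorded in Lemma \ref{lemma:trilinearpropoerties}, together with the a priori bound \eqref{energy:hdirichlet} and the smallness condition \eqref{estimate:uniquenesshdirichlet}. First I would suppose that $\bu_1,\bu_2\in V$ both satisfy \eqref{weak:hdirichlet}, subtract the two identities tested against the same $\bv\in V$, and set $\bw=\bu_1-\bu_2$. The linear part yields $\nu a_0(\bw,\bv)$ directly; for the nonlinear part I would use the linearity of $a_1$ in its first and second arguments to split
\begin{align*}
a_1(\bu_1;\bu_1,\bv)-a_1(\bu_2;\bu_2,\bv)=a_1(\bu_1;\bw,\bv)+a_1(\bw;\bu_2,\bv),
\end{align*}
so that the difference equation reads $\nu a_0(\bw,\bv)+a_1(\bu_1;\bw,\bv)+a_1(\bw;\bu_2,\bv)=0$ for every $\bv\in V$.

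The decisive step is to test this identity with $\bv=\bw$. By Lemma \ref{lemma:trilinearpropoerties}(ii) the term $a_1(\bu_1;\bw,\bw)$ vanishes, since the repeated argument $\bw$ occupies the last two slots, which is exactly the situation handled by the identity $a_1(\bv;\bu,\bu)=0$. By the coercivity computation already carried out in the proof of Theorem \ref{theorem:wphdirichlet} we have $\nu a_0(\bw,\bw)=\nu\|\bw\|_V^2$. Hence the equation collapses to $\nu\|\bw\|_V^2=-a_1(\bw;\bu_2,\bw)$, leaving a single cubic term to be controlled. I would bound it via the definition \eqref{norm:trilinear} of $\mathcal{B}$ as $|a_1(\bw;\bu_2,\bw)|\le\mathcal{B}\|\bu_2\|_V\|\bw\|_V^2$, and then insert the a priori estimate $\|\bu_2\|_V\le (c_1/\nu)\|\blf\|_{V^*}$ coming from \eqref{energy:hdirichlet}.

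Combining these gives $\nu\|\bw\|_V^2\le(\mathcal{B}c_1\|\blf\|_{V^*}/\nu)\|\bw\|_V^2$, which after multiplying by $\nu$ becomes $(\nu^2-\mathcal{B}c_1\|\blf\|_{V^*})\|\bw\|_V^2\le 0$. The smallness hypothesis \eqref{estimate:uniquenesshdirichlet} makes the bracketed factor strictly positive, forcing $\|\bw\|_V=0$ and thus $\bu_1=\bu_2$.

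I do not expect a serious obstacle, since every ingredient is already available; the points requiring care are purely in the bookkeeping. The splitting must be performed so that the term with $\bw$ repeated in the last two arguments (the one killed by Lemma \ref{lemma:trilinearpropoerties}(ii)) is separated from the surviving term $a_1(\bw;\bu_2,\bw)$, in which $\bw$ sits in the first and third slots and does \emph{not} vanish. The only other subtlety is that the $\mathcal{B}$-bound must be applied to the solution left in the nonlinear term, so one should record that the energy estimate \eqref{energy:hdirichlet} holds for that particular solution (by symmetry either $\bu_1$ or $\bu_2$ works), and that the constant $c_1$ appearing in \eqref{estimate:uniquenesshdirichlet} is precisely the Poincar\'e constant from the a priori bound.
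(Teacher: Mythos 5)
Your proposal is correct and follows essentially the same route as the paper: the same splitting of the nonlinear difference, the same cancellation of $a_1(\bu_1;\bw,\bw)$ via Lemma \ref{lemma:trilinearpropoerties}(ii), and the same use of $\mathcal{B}$ together with the a priori bound $\nu\|\bu_2\|_V\le c_1\|\blf\|_{V^*}$. The only cosmetic difference is that the paper first rewrites the surviving term as $a_1(\bw;\bw,\bu_2)$ using part (i) before estimating, whereas you bound $|a_1(\bw;\bu_2,\bw)|$ directly; both yield the identical inequality $(\nu^2-\mathcal{B}c_1\|\blf\|_{V^*})\|\bw\|_V^2\le 0$.
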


\begin{proof}
	Suppose that there exists two solutions $ {\bu}_1,{\bu}_2\in V $ for \eqref{weak:hdirichlet}, this implies that $\nu \|{\bu}_i\|_{V} \le c_1\|{\blf}\|_{V^*}$ for $i=1,2$ and that the element ${\bw} = {\bu}_1 - {\bu}_2 \in V$ satisfies 
	\begin{align}
		\nu a_0({\bw},{\bv}) + a_1({\bu}_1;{\bw},{\bv}) = a_1({\bw};{\bv},{\bu}_2)\quad \forall{\bv}\in V.
		\label{weak:differencehdirichlet}
	\end{align}
	
By taking ${\bv} = {\bw}$ in \eqref{weak:differencehdirichlet}, and utilizing Lemma \ref{lemma:trilinearpropoerties}({\it ii}) we get the following equation
\begin{align} 
	\nu\|{\bw}\|_V^2 = a_1({\bw};{\bw},{\bu}_2).
	\label{equation:9}
\end{align}
From \eqref{norm:trilinear}, and due to the energy estimate of weak solution, we further infer that
\begin{align*}
	\nu\|{\bw}\|_V^2 \le \mathcal{B}\|{\bw}\|_V^2 \|{\bu}_2\|_V \le \frac{\mathcal{B}\|{\blf}\|_{V^*}c_1}{\nu}\|{\bw}\|_V^2.
\end{align*}
This implies that $( \nu^2-\mathcal{B}\|{\blf}\|_{V^*}c_1)\|{\bw}\|_V^2 \le 0$, and from the assumption \eqref{estimate:uniquenesshdirichlet} we conclude that ${\bw} = 0$.
\end{proof}

\subsection{System with non-zero Dirichlet data}
\label{section:2.2}
In this section, we shall discuss perhaps what evaded most to consider the boundary condition we are interested in, i.e., a system coupled with a non-homogeneous Dirichlet condition. In particular, we shall consider the following system
\begin{align}
	\left\{
	\begin{aligned}
		-\nabla\cdot\sigma({\bu},p) + ({\bu}\cdot\nabla){\bu} & = {\blf} &&\text{in }\Omega,\\
		\nabla\cdot{\bu} & = 0 &&\text{in }\Omega,\\
		{\bu} & = 0 &&\text{on }\Gamma_H,\\
		{\bu} & = {\bu}_{in} &&\text{on }\Gamma_N,\\
		\sigma({\bu},p){\bn} & =\frac{1}{2}({\bu}\otimes{\bu}){\bn} &&\text{on }\Gamma_1.
	\end{aligned}
	\right.
	\label{system:nhdirichlet}
\end{align}

For the purpose of uniformity, we shall denote by $\Gamma_0 = \overline{\Gamma_H}\cup \overline{\Gamma_N}$ the boundaries of $\Omega$ except that of the outflow boundary, where the subscripts ${}_H$ and ${}_N$ stand for the homogeneous and non-homogeneous boundaries, respectively. Furthermore, since one of the main point of this system is to model outflow/inflow phenomena, we shall consider $\Gamma_H$ to constitute the walls of a channel, and if a body inside a channel is also considered the boundary of such body is also included in the definition of $\Gamma_H$ (see Figure \ref{fig1} for illustrations of such boundaries).

\begin{figure}[h!]
 \centering
  \includegraphics[width=.45\textwidth]{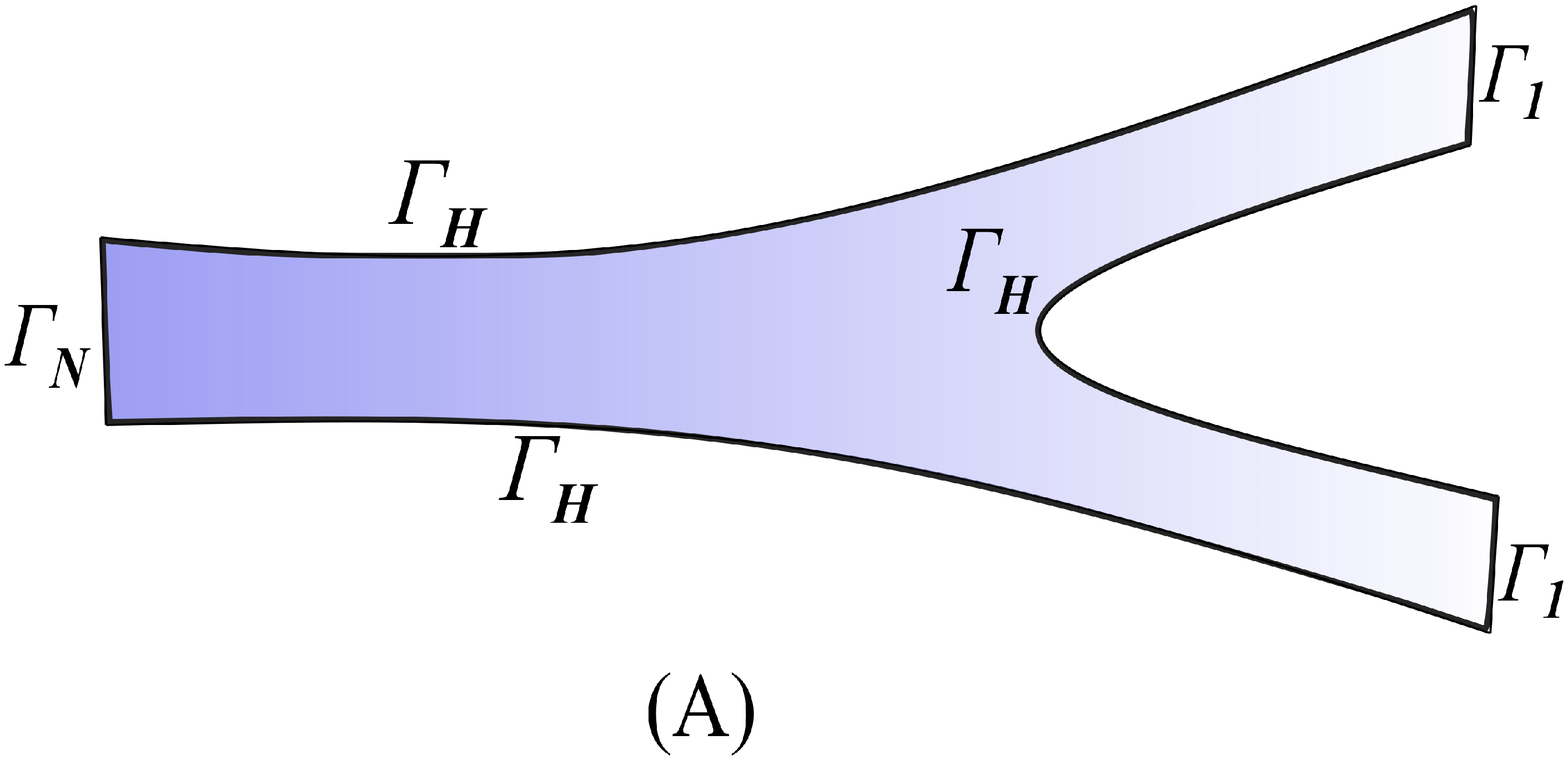} \includegraphics[width=.45\textwidth]{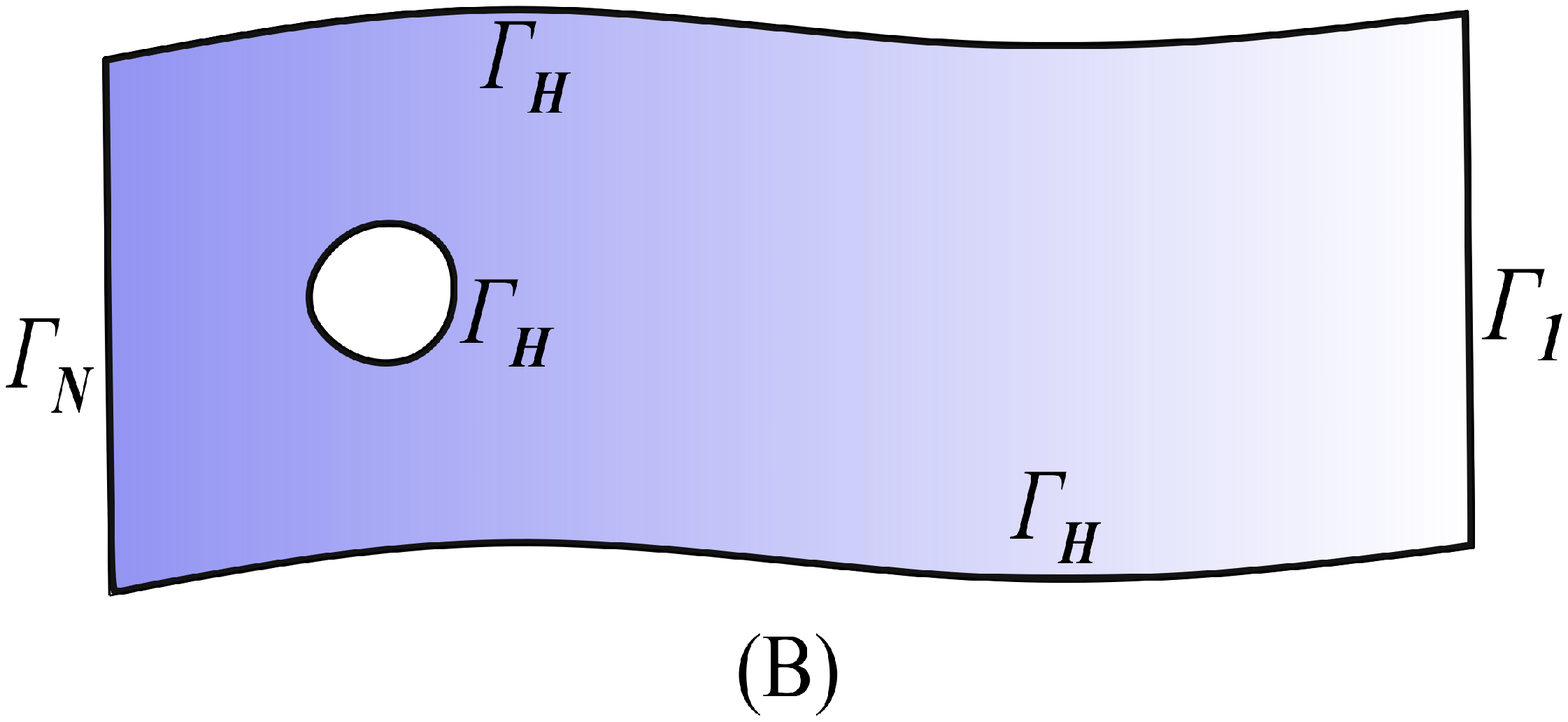}
 \caption{Illustrations of the boundaries/domains where (A) purely inflow/outflow phenomenon, and (B) flow past an obstacle are considered.}
\label{fig1}
 \end{figure}
 
The challenge in establishing the existence of the weak solution to \eqref{system:nhdirichlet} is on proving the coercivity of the left hand side of its variational form. This obstacle will be circumvented by the following lemma. Furthermore, this lemma helps in lifting the input function ${\bu}_{in}$ over the whole domain $\Omega$. Before we introduce the said result, let us mention that the input function will be assumed to satisfy ${\bu}_{in}\in H^{1/2}(\Gamma_N)$ and 
\begin{align}
\int_{\Gamma_N}{\bu}_{in}\cdot {\bn}\du s = 0.
\label{equality:nhomogeneous0}
\end{align}
\begin{lemma}
	Let $\epsilon > 0$, then there exists ${\bw}_0 := {\bw}_0(\epsilon) \in H^1(\Omega;\mathbb{R}^2)$ such that
	\begin{align}
	\left\{
		\begin{aligned}
			\nabla\cdot{\bw}_0 & = 0 &&\text{in }\Omega,\\
			{\bw}_0 & = 0 &&\text{on }\Gamma_H,\\
			{\bw}_0 & = {\bu}_{in} &&\text{on }\Gamma_N.
		\end{aligned}
	\right.
	\label{properties:midinp}
	\end{align}
	Furthermore, we get the following nonlinear estimate
	\begin{align}
		|a_1({\bv};{\bw}_0,{\bv})| \le \epsilon\|{\bv}\|_V^2\quad \forall{\bv}\in V.
		\label{estimate:midinp}
	\end{align}
	\label{lemma:midinp}
\end{lemma}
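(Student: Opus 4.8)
The plan is to realize ${\bu}_{in}$ through a solenoidal lifting that is concentrated in an arbitrarily thin neighborhood of $\Gamma_0$, so that its convective coupling with any ${\bv}\in V$ (which vanishes on $\Gamma_0$) can be absorbed by Hardy's inequality. First I would fix one solenoidal extension $\tilde{\bw}\in H^1(\Omega;\mathbb{R}^2)$ with $\nabla\cdot\tilde{\bw}=0$, $\tilde{\bw}={\bu}_{in}$ on $\Gamma_N$ and $\tilde{\bw}=0$ on $\Gamma_H$. This is possible because ${\bu}_{in}\in H^{1/2}(\Gamma_N)$ together with the compatibility condition \eqref{equality:nhomogeneous0} makes the total prescribed flux through $\Gamma_0$ vanish; since $\Gamma_1$ is unconstrained, the trace there can be chosen to close the flux balance, and a standard trace lifting followed by the resolution of a divergence equation (as in \cite[Ch.~IV]{girault1986}) produces $\tilde{\bw}$. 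Note that $\tilde{\bw}$ alone already gives \eqref{properties:midinp} but not the smallness \eqref{estimate:midinp}, which is why the $\epsilon$-dependent localization is needed.

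Next I would localize. Let $d(x)=\mathrm{dist}(x,\Gamma_0)$, let $\Omega_\delta=\{x\in\Omega:\, d(x)<\delta\}$, and let $\theta_\delta$ be a Hopf-type cutoff depending only on $d$, equal to $1$ on $\Gamma_0$, vanishing outside $\Omega_\delta$, and obeying the logarithmic bound $d(x)\,|\nabla\theta_\delta(x)|\le C\delta$. I set ${\bw}_0:=\theta_\delta\tilde{\bw}-\boldsymbol{z}_\delta$, where $\boldsymbol{z}_\delta$ solves the divergence problem $\nabla\cdot\boldsymbol{z}_\delta=\nabla\theta_\delta\cdot\tilde{\bw}$ with $\boldsymbol{z}_\delta=0$ on $\partial\Omega$ via the Bogovskii operator (solvability resting on the vanishing of the prescribed flux through $\Gamma_0$). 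Since $\tilde{\bw}$ is solenoidal this renders ${\bw}_0$ divergence free, and since $\boldsymbol{z}_\delta$ vanishes on $\partial\Omega$ the trace of ${\bw}_0$ on $\Gamma_0$ is exactly that of $\tilde{\bw}$, so \eqref{properties:midinp} is preserved. To estimate $a_1({\bv};{\bw}_0,{\bv})$ I would integrate its volume part by parts using $\nabla\cdot{\bv}=0$ and ${\bv}|_{\Gamma_0}=0$ to move the derivative off ${\bw}_0$, obtaining $a_1({\bv};{\bw}_0,{\bv})=-\int_\Omega[({\bv}\cdot\nabla){\bv}]\cdot{\bw}_0\,\du x+\tfrac12\int_{\Gamma_1}({\bv}\cdot{\bn})({\bv}\cdot{\bw}_0)\,\du s$. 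The volume term is bounded by $\|{\bv}\|_V\,\big\||{\bv}|\,{\bw}_0\big\|_{L^2(\Omega_\delta)}$, and Hardy's inequality $\int_\Omega|{\bv}|^2/d^2\,\du x\le C\|{\bv}\|_V^2$ (available because ${\bv}$ vanishes on $\Gamma_0$) combined with the logarithmic bound on $\theta_\delta$ and the concentration of $\tilde{\bw},\boldsymbol{z}_\delta$ near $\Gamma_0$ forces this to be $\le\tfrac{\epsilon}{2}\|{\bv}\|_V^2$ once $\delta$ is small enough.

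The boundary term $\tfrac12\int_{\Gamma_1}({\bv}\cdot{\bn})({\bv}\cdot{\bw}_0)\,\du s$ is the genuinely new feature, absent in the classical setting where $\Gamma_1$ would also carry a Dirichlet condition. Since ${\bw}_0=\theta_\delta\tilde{\bw}$ on $\Gamma_1$ is supported only in $\Gamma_1\cap\overline{\Omega_\delta}$, it vanishes identically when $\overline{\Gamma_0}\cap\overline{\Gamma_1}=\emptyset$, and in the general channel geometry it is supported in a shrinking neighborhood of the corners $\overline{\Gamma_0}\cap\overline{\Gamma_1}$; there a boundary trace estimate together with ${\bv}|_{\Gamma_0}=0$ (which forces ${\bv}$ to be small on that part of $\Gamma_1$) again yields $\le\tfrac{\epsilon}{2}\|{\bv}\|_V^2$, and summing the two contributions gives \eqref{estimate:midinp}. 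I expect the main obstacle to be the control of the Bogovskii corrector $\boldsymbol{z}_\delta$: although it restores solenoidality and leaves the trace on $\Gamma_0$ untouched, one must show that it stays concentrated near $\Gamma_0$ and satisfies the weighted smallness needed by the Hardy estimate, which hinges on the sharp logarithmic behavior of $\theta_\delta$ and on the flux normalization \eqref{equality:nhomogeneous0} that makes its defining divergence problem solvable; the whole localization must in addition remain compatible with the new outflow integral over $\Gamma_1$.
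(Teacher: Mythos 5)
Your overall strategy --- a solenoidal lifting, a Hopf-type cutoff $\theta_\delta$ concentrating it near $\Gamma_0$, and Hardy's inequality, with separate care for the new boundary integral over $\Gamma_1$ --- is the same as the paper's, but the localization device you chose does not work. The paper multiplies the cutoff into the \emph{stream function}: with ${\bu}_0=\nabla\times\phi$, $\phi\in H^2(\Omega;\mathbb{R})\hookrightarrow L^\infty(\Omega;\mathbb{R})$ and $\phi|_{\Gamma_H}=0$, it sets ${\bw}_0=\nabla\times(\theta_\delta\phi)$; this is automatically divergence free, needs no corrector, and the only dangerous term $\phi\,\nabla\times\theta_\delta$ carries the \emph{bounded} factor $\phi$ against the weight $\delta/d$, which Hardy's inequality (applied to ${\bv}$, which vanishes on $\Gamma_0$) absorbs. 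You instead cut off the vector field and repair the divergence with a Bogovskii corrector $\boldsymbol{z}_\delta$ solving $\nabla\cdot\boldsymbol{z}_\delta=\nabla\theta_\delta\cdot\tilde{\bw}$, and this is where the argument fails: since $|\nabla\theta_\delta|\le\delta/d$ on an annulus reaching down to distance of order $e^{-2/\delta}$ from $\Gamma_0$, and since $\tilde{\bw}$ does \emph{not} vanish on $\Gamma_N$ (its trace there is ${\bu}_{in}$), the source satisfies only $|\nabla\theta_\delta\cdot\tilde{\bw}|\le(\delta/d)|\tilde{\bw}|$ with $\tilde{\bw}/d\notin L^2$ near $\Gamma_N$; the quantity $\|\nabla\theta_\delta\cdot\tilde{\bw}\|_{L^2(\Omega)}$ is therefore not small as $\delta\to0$ (generically it diverges like $\delta\, e^{1/\delta}$). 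Consequently $\|\boldsymbol{z}_\delta\|_{H^1}$ cannot be controlled, let alone made small, and the ``weighted smallness'' you yourself flag as the main obstacle is not a technicality but precisely the point where the vector-field cutoff is unusable --- it is the reason Hopf's classical construction, and the paper, pass through the stream function. A secondary issue: the solvability condition $\int_\Omega\nabla\theta_\delta\cdot\tilde{\bw}\du x=\int_{\partial\Omega}\theta_\delta\tilde{\bw}\cdot{\bn}\du s=0$ is not guaranteed by \eqref{equality:nhomogeneous0} alone if $\tilde{\bw}$ has nonzero trace on $\Gamma_1$ ``to close the flux balance'', because $\theta_\delta$ is non-constant on $\Gamma_1$; one must take $\tilde{\bw}=0$ on $\partial\Omega\setminus\Gamma_N$, as the paper does for ${\bu}_0$.

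Your treatment of the boundary term over $\Gamma_1$ is also incomplete. The assertion that ${\bv}|_{\Gamma_0}=0$ ``forces ${\bv}$ to be small on that part of $\Gamma_1$'' is not a statement that is uniform over the unit ball of $V$, which is what \eqref{estimate:midinp} requires. The paper's mechanism is different: it bounds $\bigl|\int_{\Gamma_1}({\bv}\cdot{\bn})({\bv}\cdot{\bw}_0)\du s\bigr|\le c\|{\bv}\|^2_{L^4(\Gamma_1;\mathbb{R}^2)}\|{\bw}_0\|_{L^2(\Gamma_1;\mathbb{R}^2)}$ and makes the \emph{last} factor small by applying the one-dimensional Hardy inequality along $\Gamma_1$ to the stream function $\phi$, which vanishes on the adjacent piece $\Gamma_H$; this controls the $\delta\,\phi/d$ contribution on $\Gamma_1^\delta$ by $\|\nabla\phi\|_{L^2(\Gamma_1^\delta;\mathbb{R}^2)}\to0$. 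Without the stream function this step has no obvious substitute in your setup, so both halves of the estimate ultimately hinge on the construction you replaced.
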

 
We use the same methods as in the proof of \cite[Lemma IV.2.3]{girault1986} but had to make sure that the estimate \eqref{estimate:midinp} holds as well for the boundary integral.
 
 \begin{proof}
	
	From \eqref{equality:nhomogeneous0} there exists ${\bu}_0 \in H^1(\Omega;\mathbb{R}^2)$ such that $\dive{\bu}_0 = 0$ in $\Omega$, and ${\bu}_0|_{\Gamma_N} = {\bu}_{in}$ and ${\bu}_0|_{\partial\Omega\backslash\Gamma_N} = 0$. From \cite[Theorem I.3.1]{girault1986}, ${\bu}_0 = \nabla\times\phi$ for some stream function $\phi\in H^2(\Omega;\mathbb{R})$.  Furthermore,  we can choose a particular stream function such that $\phi|_{\Gamma_H} = 0$. Let us then define ${\bw}_{0,\delta}\in H^1(\Omega;\mathbb{R}^2)$ as ${\bw}_{0,\delta} = \nabla\times(\theta_\delta \phi)$, where $\theta_\delta\in C^2(\overline{\Omega};\mathbb{R})$ is the function that satisfies ( see \cite[Lemma IV.2.4]{girault1986})
	\begin{align*}
		\left\{
			\begin{aligned}
				&\theta_\delta = 1 &&\text{in a neighborhood of }\Gamma_0,\\
				&\theta_\delta(x) = 0 &&\text{for }d(x,\Gamma_0) \ge 2e^{-1/\delta},\\
				&|\partial\theta_\delta/\partial x_i| \le \delta/d(x,\Gamma_0) &&\text{for }d(x,\Gamma_0) \le 2e^{-1/\delta}.
			\end{aligned}
		\right.
	\end{align*}
	
	It can be easily shown that
	\begin{align}
	 \|v_iw_j\|_{L^2(\Omega;\mathbb{R})}\le c_\delta|v_i|_{H^1(\Omega;\mathbb{R})},
	 \label{l2est}
	 \end{align}
	where $v_i$ and $w_j$, for $i,j=1,2$, are such that ${\bv}=(v_1,v_2)\in H_{\Gamma_0}^1(\Omega;\mathbb{R}^2)$ and ${\bw}_{0,\delta}=(w_1,w_2)$, and the constant $c_\delta>0$ is dependent on $\delta>0$ in such a way that $c_\delta \to 0$ as $\delta\to0$.
	Now,  from Lemma \ref{lemma:trilinearpropoerties}(i) we get 
	\begin{align*}
		|a_1({\bv};{\bw}_{0,\delta},{\bv})| &  = |({\bv}\cdot\nabla{\bv},{\bw}_{0,\delta})_{\Omega}- \frac{1}{2}({\bv}\cdot{\bn}, {\bv}\cdot{\bw}_{0,\delta})_{\Gamma_{\rm out}}|\\
		&\le \left| \sum_{i,j=1}^2 \int_{\Omega} v_iw_j\frac{\partial v_j}{\partial x_i} \du x\right| +  \frac{1}{2}\left| \int_{\Gamma_{1}} ({\bv}\cdot{\bn})({\bv}\cdot{\bw}_{0,\delta}) \du s \right|.
	\end{align*}
	
	Using \eqref{l2est}, the first expression on the last line of the previous computation can be estimated as follows:
	\begin{align}
		\left| \sum_{i,j=1}^2 \int_{\Omega} v_iw_j\frac{\partial v_j}{\partial x_i} \du x\right| & \le c_{1,\delta}\|\bv\|_{V}^2,\label{est:trili}
	\end{align}
	where $ c_{1,\delta} > 0$ is such that $c_{1,\delta} \to 0$ as $\delta\to0$.
	
	As for the boundary integral,  we use the following Hardy inequality \cite[Theorem 330]{hardy1934}:
	\begin{lemma}
		Let $p> 1$, and $\delta \in (0,\infty]$. For any $u\in W^{1,p}(0,\delta)$ such that $u(\delta) = 0$, the following inequality holds:
		\begin{align*}
			\int_0^\delta \frac{|u|^p}{t^p}\du t \le \left| \frac{p}{p-1} \right|^p\int_0^\delta {|u'|^p}\du t.
		\end{align*}
		\label{lemma:hardy}
	\end{lemma}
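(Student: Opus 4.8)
The plan is to prove this classical Hardy inequality by the textbook route: an integration by parts that converts the singular weight into a factor of $u'$, followed by a single application of H\"older's inequality. Two harmless reductions come first. Replacing $u$ by $|u|$ changes neither side (one has $|(|u|)'|=|u'|$ almost everywhere, and $u$ and $|u|$ vanish at the same points), so I may assume $u\ge 0$; and by density of smooth functions satisfying the vanishing condition it suffices to carry out the computation for smooth $u$, recovering the general case $u\in W^{1,p}(0,\delta)$ at the end.

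Write $I:=\int_0^\delta u^p\,t^{-p}\du t$. Viewing $t^{-p}$ as the derivative of $t^{1-p}/(1-p)$, I would integrate by parts to obtain
\begin{align*}
I=\left[\frac{t^{1-p}}{1-p}\,u^p\right]_{0}^{\delta}+\frac{p}{p-1}\int_0^\delta t^{1-p}\,u^{p-1}u'\du t.
\end{align*}
The boundary term at the endpoint where the weight $t^{-p}$ is singular is killed by the vanishing of $u$ there: writing $u(t)=\int_0^t u'$ and applying H\"older gives $u(t)\le t^{1-1/p}\|u'\|_{L^p(0,t)}$, so that $t^{1-p}u(t)^p\le\int_0^t|u'|^p\to0$; the contribution at the remaining endpoint has a nonpositive sign (because $1-p<0$) and may simply be dropped. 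Rewriting $t^{1-p}u^{p-1}=(u/t)^{p-1}$ then leaves the estimate
\begin{align*}
I\le\frac{p}{p-1}\int_0^\delta\left(\frac{u}{t}\right)^{p-1}|u'|\du t.
\end{align*}

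Next I would apply H\"older's inequality to the last integral with the conjugate exponents $p/(p-1)$ and $p$. Because $(p-1)\cdot\frac{p}{p-1}=p$, the first factor is precisely $I^{(p-1)/p}$ while the second is $\bigl(\int_0^\delta|u'|^p\du t\bigr)^{1/p}$, giving
\begin{align*}
I\le\frac{p}{p-1}\,I^{(p-1)/p}\left(\int_0^\delta|u'|^p\du t\right)^{1/p}.
\end{align*}
Dividing through by $I^{(p-1)/p}$ and raising the result to the $p$-th power produces the claimed inequality with constant $(p/(p-1))^p$.

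The delicate points, which I expect to be the main obstacle, are precisely the two formal steps above: the vanishing of the singular boundary term and the division by $I^{(p-1)/p}$, both of which tacitly assume $0<I<\infty$. I would make these rigorous by an exhaustion argument, performing the integration by parts on a compact subinterval $(\eta,R)\subset(0,\delta)$ on which every quantity is finite and the boundary terms are explicit, deriving the bound with a constant independent of $\eta$ and $R$, and then letting $\eta\to0$ and $R\to\delta$ (permitting $\delta=\infty$) via monotone convergence. The degenerate cases $I=0$ and $I=\infty$ are then either trivial or ruled out a posteriori by the uniform bound, and the passage from smooth $u$ to a general element of $W^{1,p}(0,\delta)$ satisfying the vanishing hypothesis follows by density.
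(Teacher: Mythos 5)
The paper does not prove this lemma at all: it is quoted verbatim as Theorem~330 of Hardy--Littlewood--P\'olya's \emph{Inequalities} and used as a black box, so there is no in-paper argument to compare against. Your integration-by-parts-plus-H\"older argument is the standard textbook proof of the classical one-dimensional Hardy inequality, and as a proof of that classical inequality it is sound, including the two delicate points you isolate (the vanishing of the singular boundary term via $t^{1-p}u(t)^p\le\int_0^t|u'|^p\to0$, and the truncation argument justifying the division by $I^{(p-1)/p}$).

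There is, however, a mismatch between what you prove and what the statement literally asserts, and you should not let it pass silently. The hypothesis in the lemma is $u(\delta)=0$, i.e.\ vanishing at the endpoint \emph{away} from the singularity of the weight $t^{-p}$; but your proof writes $u(t)=\int_0^t u'$ and kills the boundary term at $t=0$ ``by the vanishing of $u$ there,'' which is the hypothesis $u(0)=0$. These are not interchangeable: as literally stated the inequality is false (take $p=2$, $\delta=1$, $u(t)=1-t$; then $u(1)=0$ but the left side $\int_0^1(1-t)^2t^{-2}\,\mathrm{d}t$ diverges while the right side equals $4$). The statement in the paper appears to carry a typo --- in the application the stream function $\phi$ vanishes at the end of $\Gamma_1^\delta$ adjacent to $\Gamma_H$, which is parametrized by $t=0$, so the version actually needed (and the one in Hardy--Littlewood--P\'olya) is the one with $u(0)=0$ that you have in effect proved. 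So your argument establishes the corrected lemma; to be a proof of the lemma \emph{as written} it would have to fail, and you should explicitly note that you are replacing the hypothesis $u(\delta)=0$ by $u(0)=0$.
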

	
	Now, from the properties of $\theta_\delta$, and by denoting $\Gamma_{1}^{\delta} := \{s\in\Gamma_{1}: d(s,\Gamma_0) \le 2e^{-1/\delta} \}$, we get
	\begin{align}
		\begin{aligned}
		\|{\bw}_{0,\delta}\|_{L^2(\Gamma_{1};\mathbb{R}^2)}  & = \left( \int_{\Gamma_{\rm out}^\delta} |{\bw}_{0,\delta}|^2 \du s \right)^{1/2} \le c \left( \int_{\Gamma_{1}^\delta}  \delta^2\frac{|\phi|^2}{d(s,\Gamma_0)^2} + \left|\nabla\phi\right|^2 \du s \right)^{1/2}\\
			& \le c_1\delta\left( \int_{\Gamma_{1}^\delta}  \frac{|\phi|^2}{d(s,\Gamma_0)^2} \du s \right)^{1/2} + c_2\|\nabla\phi\|_{L^2(\Gamma_{1}^\delta;\mathbb{R}^2)}.
		\end{aligned}	\label{est:w0}
	\end{align}
	
	Due to the regularity assumption on the domain $\Omega$ and since $\phi = 0$ on $\Gamma_{H}$ which is adjacent to the boundary $\Gamma_{1}^{\delta}$,  we can write the boundary integral as the one-dimensional integral
	\[ \int_0^{\Delta(\delta)} \left|\frac{\phi(t)}{t}\right|^2 \du t, \]
	where $\Delta(\delta)$ corresponds to the arc length of the boundary $\Gamma_{1}^{\delta}$. From Lemma \ref{lemma:hardy}, we get 
	\[ \int_0^{\Delta(\delta)} \left|\frac{\phi(t)}{t}\right|^2 \du t \le 4  \int_0^{\Delta(\delta)} \left|\phi'(t)\right|^2 \du t. \]
	This implies that the estimate \eqref{est:w0} can further be estimated as 
	\begin{align}
		\begin{aligned}
		\|{\bw}_{0,\delta}\|_{L^2(\Gamma_{1};\mathbb{R}^2)}  & \le (c_1\delta + c_2)\|\nabla\phi\|_{L^2(\Gamma_{1}^\delta;\mathbb{R}^2)}.
		\end{aligned}	
	\end{align}
	Since $\|\nabla\phi\|_{L^2(\Gamma_{1}^\delta;\mathbb{R}^2)} \to 0$ as $\delta\to 0$,  $ c_{2,\delta} := (c_1\delta + c_2)\|\nabla\phi\|_{L^2(\Gamma_{1}^\delta;\mathbb{R}^2)}\to 0$ as $\delta\to0$.  Thus, we get -- with the help of the Rellich-Kondrachov embedding theorem -- that
	\begin{align}
		\begin{aligned}
		\left| \int_{\Gamma_{1}} ({\bv}\cdot{\bn})({\bv}\cdot{\bw}_{0,\delta}) \du s \right| \le&\, c\|({\bv}\cdot{\bn}){\bv}\|_{L^2(\Gamma_1;\mathbb{R})}\|{\bw}_{0,\delta} \|_{L^2(\Gamma_{1};\mathbb{R})}\\ 
		\le&\,  c\|{\bv}\|_{L^4(\Gamma_{1};\mathbb{R}^2)}^2\|{\bw}_{0,\delta}\|_{L^2(\Gamma_{1};\mathbb{R}^2)}\\
		\le&\,  c_{3,\delta}\|{\bv}\|_{{V}}^2.
		\end{aligned}\label{est:tribdy}
	\end{align}
	where $c_{3,\delta} \to 0$ as $\delta\to 0$.

	From \eqref{est:trili} and \eqref{est:tribdy},  we get 
	\begin{align*}
		|a_1({\bv};\nabla{\bw}_{0,\delta},{\bv})) |  \le \left(c_{1,\delta} + \frac{c_{3,\delta}}{2}\right)\|{\bv}\|_{{V}}^2.
	\end{align*}
	Since $\left(c_{1,\delta} + \frac{c_{3,\delta}}{2}\right)\to 0$ as $\delta\to 0$, we can choose $\delta>0$ small enough so that $\left(c_{1,\delta} + \frac{c_{3,\delta}}{2}\right)\le \epsilon$, and with this choice of $\delta$ we take ${\bw}_{0} = {\bw}_{0,\delta}$. 
	\end{proof}

%
 
 For an arbitrary $\epsilon >0$ and ${\bu}_0\in H^1(\Omega;\mathbb{R}^2)$ from Lemma \ref{lemma:midinp}, we call the element $\tilde{\bu}\in V$ the perturbed weak solution of \eqref{system:nhdirichlet} if it satisfies the equation
 \begin{align}
 	\nu a_0(\tilde{\bu},{\bv}) + a_1(\tilde{\bu};\tilde{\bu},{\bv}) + a_1({\bw}_0;\tilde{\bu},{\bv}) + a_1(\tilde{\bu};{\bw}_0,{\bv}) = \langle \Phi,{\bv}\rangle_{V^*\times V},
 	\label{weak:nhdirichlet}
 \end{align}
where $\Phi\in V^*$ is defined as 
\begin{align*}
	 \langle \Phi,{\bv}\rangle_{V^*\times V} = \langle{\blf},{\bv}\rangle_{V^*\times V} - \nu a_0({\bw}_0,{\bv}) - a_1({\bw}_0;{\bw}_0,{\bv}).
\end{align*}
Note that the element ${\bu} = \tilde{\bu} + {\bw}_0$ can be regarded as the weak solution to the system \eqref{system:nhdirichlet}.

\begin{theorem}
	Let $\Omega\subset\mathbb{R}^2$ be of class $\mathcal{C}^1$, ${\blf}\in V^*$, and ${\bu}_{in}\in H^{1/2}(\Gamma_N)$ satisfies \eqref{equality:nhomogeneous0}. Then, the perturbed weak solution $\tilde{\bu}\in V$ of \eqref{weak:nhdirichlet} exists and satisfies
	\begin{align}
		\|\tilde{\bu}\|_{V} \le c(\|\blf\|_{V^*} + (\nu +\|{\bw}_0\|_{H^1(\Omega;\mathbb{R}^2)}^2)\|{\bw}_0\|_{H^1(\Omega;\mathbb{R}^2)}^2),
		\label{energy:nhdirichlet}
	\end{align}
	for some constant $c>0$.
	\label{theorem:wpnhdirichlet}
\end{theorem}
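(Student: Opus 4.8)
The plan is to reproduce the strategy of Theorem~\ref{theorem:wphdirichlet}: I would show that the left-hand side of \eqref{weak:nhdirichlet}, viewed as the map $\tilde{\bu}\mapsto \nu a_0(\tilde{\bu},{\bv}) + a_1(\tilde{\bu};\tilde{\bu},{\bv}) + a_1({\bw}_0;\tilde{\bu},{\bv}) + a_1(\tilde{\bu};{\bw}_0,{\bv})$, is coercive and sequentially weakly continuous for every ${\bv}\in V$, and then invoke the same existence theorem of \cite{girault1986} used before. The functional $\Phi$ is a legitimate element of $V^*$ since both $a_0({\bw}_0,\cdot)$ and $a_1({\bw}_0;{\bw}_0,\cdot)$ are bounded linear functionals on $V$, using ${\bw}_0\in H^1(\Omega;\mathbb{R}^2)$ together with \eqref{estimate:trilinear} and the trace embedding into $L^4(\Gamma_1;\mathbb{R}^2)$ for the boundary part.

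Coercivity is the heart of the argument, and it is precisely why Lemma~\ref{lemma:midinp} was established. Testing the left-hand side against $\tilde{\bu}$ itself, the cubic term vanishes by Lemma~\ref{lemma:trilinearpropoerties}(ii), $a_1(\tilde{\bu};\tilde{\bu},\tilde{\bu}) = 0$. The term $a_1({\bw}_0;\tilde{\bu},\tilde{\bu})$ also vanishes: even though ${\bw}_0\notin V$, the identity $a_1({\bw}_0;\tilde{\bu},\tilde{\bu}) = \tfrac12\int_{\Gamma_0}({\bw}_0\cdot{\bn})|\tilde{\bu}|^2\du s = 0$ still follows from $\nabla\cdot{\bw}_0 = 0$ and $\tilde{\bu}|_{\Gamma_0} = 0$, by the same integration by parts that proves Lemma~\ref{lemma:trilinearpropoerties}(ii). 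The only surviving nonlinear contribution is the sign-indefinite term $a_1(\tilde{\bu};{\bw}_0,\tilde{\bu})$; here I would apply Lemma~\ref{lemma:midinp} with a fixed $\epsilon\in(0,\nu)$ (e.g. $\epsilon=\nu/2$) to obtain $|a_1(\tilde{\bu};{\bw}_0,\tilde{\bu})|\le\epsilon\|\tilde{\bu}\|_V^2$, so that
\begin{align*}
\nu a_0(\tilde{\bu},\tilde{\bu}) + a_1(\tilde{\bu};\tilde{\bu},\tilde{\bu}) + a_1({\bw}_0;\tilde{\bu},\tilde{\bu}) + a_1(\tilde{\bu};{\bw}_0,\tilde{\bu}) \ge (\nu-\epsilon)\|\tilde{\bu}\|_V^2.
\end{align*}
This is exactly the obstacle flagged just before Lemma~\ref{lemma:midinp}, and resolving it fixes the choice of ${\bw}_0 = {\bw}_0(\epsilon)$ once and for all.

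For sequential weak continuity I would take ${\bu}_n\rightharpoonup\tilde{\bu}$ in $V$. The terms $\nu a_0({\bu}_n,{\bv})$ and $a_1({\bu}_n;{\bu}_n,{\bv})$ are handled verbatim as in Theorem~\ref{theorem:wphdirichlet}, using the compact embeddings ${\bu}_n\to\tilde{\bu}$ in $H$ and in $L^q(\Gamma_1;\mathbb{R}^2)$ from Rellich--Kondrachov. The two new terms are linear in the weakly converging argument: in $a_1({\bw}_0;{\bu}_n,{\bv})$ the gradient $\nabla{\bu}_n\rightharpoonup\nabla\tilde{\bu}$ pairs weakly against the fixed $L^2$ field $({\bw}_0)_i v_j$, while its boundary part converges by the strong $L^q(\Gamma_1;\mathbb{R}^2)$ convergence; in $a_1({\bu}_n;{\bw}_0,{\bv})$ the factor ${\bu}_n$ converges strongly in $L^4(\Omega;\mathbb{R}^2)$ and on $\Gamma_1$, so both the volume and boundary integrals pass to the limit. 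Hence the full map is sequentially weakly continuous, and the cited theorem of \cite{girault1986} produces a solution $\tilde{\bu}\in V$ of \eqref{weak:nhdirichlet}.

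Finally, the estimate \eqref{energy:nhdirichlet} follows by testing \eqref{weak:nhdirichlet} with ${\bv}=\tilde{\bu}$: the left-hand side is bounded below by $(\nu-\epsilon)\|\tilde{\bu}\|_V^2$, while the right-hand side $\langle\Phi,\tilde{\bu}\rangle_{V^*\times V}$ is estimated termwise. The source contributes $\|{\blf}\|_{V^*}\|\tilde{\bu}\|_V$, the viscous lift $\nu a_0({\bw}_0,\tilde{\bu})$ is controlled by the continuity of $a_0$, and the trilinear lift $a_1({\bw}_0;{\bw}_0,\tilde{\bu})$ is controlled by the continuity of $a_1$ (volume part via \eqref{estimate:trilinear}, boundary part via the $L^4(\Gamma_1;\mathbb{R}^2)$ trace), producing the appropriate powers of $\|{\bw}_0\|_{H^1(\Omega;\mathbb{R}^2)}$. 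Dividing by $\|\tilde{\bu}\|_V$ and collecting these contributions yields a bound of the stated form \eqref{energy:nhdirichlet}. I expect the coercivity step to be the genuine difficulty; the weak-continuity and energy steps are routine adaptations of the homogeneous case once Lemma~\ref{lemma:midinp} is available.
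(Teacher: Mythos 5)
Your proposal is correct and follows essentially the same route as the paper: coercivity via Lemma~\ref{lemma:midinp} with $\epsilon=\nu/2$, sequential weak continuity adapted from Theorem~\ref{theorem:wphdirichlet}, and the energy estimate by testing with $\bv=\tilde{\bu}$ and bounding $\langle\Phi,\tilde{\bu}\rangle_{V^*\times V}$ termwise. In fact you are slightly more explicit than the paper in justifying why $a_1({\bw}_0;\tilde{\bu},\tilde{\bu})=0$ despite ${\bw}_0\notin V$ (using $\nabla\cdot{\bw}_0=0$ and $\tilde{\bu}|_{\Gamma_0}=0$), a step the paper leaves implicit.
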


\begin{proof}
The proof of the theorem - just as we have done in Theorem \ref{theorem:wphdirichlet} - will be divided into two parts: first is to show that the left hand side of \eqref{weak:nhdirichlet} is coercive; the next one is to show that the map 
\begin{align*}
	\tilde{\bu}\mapsto \nu a_0(\tilde{\bu},{\bv}) + a_1(\tilde{\bu};\tilde{\bu},{\bv}) + a_1({\bw}_0;\tilde{\bu},{\bv}) + a_1(\tilde{\bu};{\bw}_0,{\bv})
\end{align*}
is sequentially weakly continuous.

The latter step can be done similarly with that of the previous section, so we shall only delve into proving the coercivity.

Indeed, by taking $\bv = \tilde{\bu}$ on the left hand side of \eqref{weak:nhdirichlet}, and by utilizing \eqref{estimate:midinp} with $\epsilon = \nu/2$ we get
\begin{align*}
	\nu\|\tilde{\bu}\|_V^2 + a_1(\tilde{\bu};{\bw}_0,\tilde{\bu}) \ge \frac{\nu}{2}\|\tilde{\bu}\|_V^2.
\end{align*}
As for the energy estimate \eqref{energy:nhdirichlet}, taking $\bv = \tilde{\bu}$ on the right hand side of \eqref{weak:nhdirichlet} gives us
	\begin{align*}
		| \langle \Phi,\tilde{\bu}\rangle_{V^*\times V}| = &\, | \langle{\blf},\tilde{\bu}\rangle_{V^*\times V} - \nu a_0({\bw}_0,\tilde{\bu}) - a_1({\bw}_0;{\bw}_0,\tilde{\bu})|\\
		\le &\, c_1\|{\blf}\|_{V^*}\|\tilde{\bu}\|_V + \nu\|\tilde{\bu}\|_V\|{\bw}_0\|_{H^1(\Omega;\mathbb{R}^2)} + \|\tilde{\bu}\|_V\|{\bw}_0\|_{H^1(\Omega;\mathbb{R}^2)}^2,
	\end{align*}
	where $c_1>0$ is the Poincar{\'e} constant.
	Therefore, we have the following estimate with $c = \frac{2}{\nu}\max\{ 1,c_1\}$
	\[ 
		\|\tilde{\bu}\|_{V} \le c(\|\blf\|_{V^*} + (\nu +\|{\bw}_0\|_{H^1(\Omega;\mathbb{R}^2)}^2)\|{\bw}_0\|_{H^1(\Omega;\mathbb{R}^2)}^2).
	\]
\end{proof}

We note that even though the solution seem to have been dependent on the parameter $\epsilon>0$, we point out that we only needed this dependence on the value $\epsilon=\nu/2$, this will hold true even for the upcoming result on uniqueness of solutions.We also infer from the last computations in the proof of Theorem \ref{theorem:wpnhdirichlet} that the following inqualities hold
\begin{align}
	\|\tilde{\bu}\|_{V} \le \frac{2}{\nu} \|\Phi\|_{V^*} \le c(\|\blf\|_{V^*} + (\nu +\|{\bw}_0\|_{H^1(\Omega;\mathbb{R}^2)}^2)\|{\bw}_0\|_{H^1(\Omega;\mathbb{R}^2)}^2).
	\label{estimate:dualphi}
\end{align}

\begin{theorem}
	Suppose that the assumptions in Theorem \ref{theorem:wpnhdirichlet} hold, and that the following estimate is true
	\begin{align}
		4\mathcal{B}\|\Phi\|_{V^*} < \nu^2.
		\label{estimate:uniquenessnhdirichlet}
	\end{align}
	Then the perturbed weak solution $\tilde{\bu}\in V$ of \eqref{weak:nhdirichlet} is unique.
	\label{theorem:uniquenessnhdirichlet}
\end{theorem}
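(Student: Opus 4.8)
The plan is to mirror the uniqueness argument of Theorem~\ref{theorem:uniquenesshdirichlet}, but now keeping track of the lifting ${\bw}_0$, whose contribution must be controlled by Lemma~\ref{lemma:midinp} rather than by the norm $\mathcal{B}$. Suppose $\tilde{\bu}_1,\tilde{\bu}_2\in V$ both solve \eqref{weak:nhdirichlet}, and set ${\bw}=\tilde{\bu}_1-\tilde{\bu}_2\in V$. Subtracting the two identities and using the trilinearity of $a_1$ together with the decomposition $a_1(\tilde{\bu}_1;\tilde{\bu}_1,{\bv})-a_1(\tilde{\bu}_2;\tilde{\bu}_2,{\bv})=a_1(\tilde{\bu}_1;{\bw},{\bv})+a_1({\bw};\tilde{\bu}_2,{\bv})$, I would obtain
\begin{align*}
 \nu a_0({\bw},{\bv}) + a_1(\tilde{\bu}_1;{\bw},{\bv}) + a_1({\bw};\tilde{\bu}_2,{\bv}) + a_1({\bw}_0;{\bw},{\bv}) + a_1({\bw};{\bw}_0,{\bv}) = 0 \quad \forall {\bv}\in V.
\end{align*}

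Next I would test this identity with ${\bv}={\bw}$. By Lemma~\ref{lemma:trilinearpropoerties}(ii) the term $a_1(\tilde{\bu}_1;{\bw},{\bw})$ vanishes, and the very same computation as in the proof of that lemma---which uses only that ${\bw}_0$ is divergence-free and that ${\bw}\in V$ vanishes on $\Gamma_0$---shows that $a_1({\bw}_0;{\bw},{\bw})=0$ as well, even though ${\bw}_0\notin V$. Recalling $a_0({\bw},{\bw})=\|{\bw}\|_V^2$, this reduces the equation to
\begin{align*}
 \nu\|{\bw}\|_V^2 = -a_1({\bw};\tilde{\bu}_2,{\bw}) - a_1({\bw};{\bw}_0,{\bw}).
\end{align*}

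The two remaining terms are estimated by different tools, which is the crux of the argument. For the genuinely nonlinear term I would use the definition \eqref{norm:trilinear} of $\mathcal{B}$ together with the a priori bound \eqref{estimate:dualphi}, giving $|a_1({\bw};\tilde{\bu}_2,{\bw})|\le \mathcal{B}\|\tilde{\bu}_2\|_V\|{\bw}\|_V^2 \le \tfrac{2}{\nu}\mathcal{B}\|\Phi\|_{V^*}\|{\bw}\|_V^2$. For the lifting term---where ${\bw}_0\notin V$ forbids a direct appeal to $\mathcal{B}$---I would instead invoke Lemma~\ref{lemma:midinp} with $\epsilon=\nu/2$ to get $|a_1({\bw};{\bw}_0,{\bw})|\le \tfrac{\nu}{2}\|{\bw}\|_V^2$. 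Combining the two bounds yields $(\nu^2-4\mathcal{B}\|\Phi\|_{V^*})\|{\bw}\|_V^2\le 0$, and the hypothesis \eqref{estimate:uniquenessnhdirichlet} forces ${\bw}=0$, i.e. $\tilde{\bu}_1=\tilde{\bu}_2$.

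The main obstacle---and the reason the threshold carries the extra factor of $4$ relative to \eqref{estimate:uniquenesshdirichlet}---is precisely the presence of the lifting ${\bw}_0$: one cannot treat $a_1({\bw};{\bw}_0,{\bw})$ by the uniform estimate, and must absorb half of the coercive norm, $\tfrac{\nu}{2}\|{\bw}\|_V^2$, through Lemma~\ref{lemma:midinp}, leaving only $\tfrac{\nu}{2}\|{\bw}\|_V^2$ to compete against $\tfrac{2}{\nu}\mathcal{B}\|\Phi\|_{V^*}\|{\bw}\|_V^2$. Care is also needed to justify that the skew-symmetry and vanishing identities for $a_1$ remain valid when ${\bw}_0$ occupies the first slot; as noted above this follows from $\dive{\bw}_0=0$ and ${\bw}|_{\Gamma_0}=0$ rather than from membership ${\bw}_0\in V$.
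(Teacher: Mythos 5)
Your proposal is correct and follows essentially the same route as the paper: form the difference equation, test with ${\bw}$, absorb the lifting term via Lemma~\ref{lemma:midinp} with $\epsilon=\nu/2$, and bound the remaining trilinear term by $\mathcal{B}$ together with the a priori estimate \eqref{estimate:dualphi}, arriving at $(\nu^2-4\mathcal{B}\|\Phi\|_{V^*})\|{\bw}\|_V^2\le 0$. The only differences are cosmetic (which of $\tilde{\bu}_1,\tilde{\bu}_2$ lands in the surviving term, and your explicit --- and welcome --- justification that $a_1({\bw}_0;{\bw},{\bw})=0$ despite ${\bw}_0\notin V$, which the paper uses implicitly).
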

\begin{proof}
	Again, we assume two solutions $\tilde{\bu}_1,\tilde{\bu}_2\in V$ of \eqref{weak:nhdirichlet}, and note that both solutions satisfy \eqref{estimate:dualphi}. This implies that the element $\tilde{\bw} = \tilde{\bu}_1-\tilde{\bu}_2\in V$ solves the equation
	\begin{align}
		\nu a_0({\bw},{\bv}) + a_1({\bw};{\bw}_0,{\bv}) + a_1({\bu}_2;{\bw},{\bv}) + a_1({\bw}_0;{\bw},{\bv}) = a_1({\bw};{\bv},{\bu}_1)\quad \forall{\bv}\in V.
		\label{weak:differencenhdirichlet}
	\end{align}
Taking ${\bv} = {\bw}$ , by virtue of Lemmas \ref{lemma:trilinearpropoerties}(ii) and \ref{lemma:midinp}, and by utilizing the estimates \eqref{estimate:trilinear} and \eqref{estimate:dualphi} yield
	\begin{align*}
		\frac{\nu}{2}\|{\bw}\|_V^2 \le&\, \nu\|{\bw}\|_V^2 + a_1({\bw};{\bw}_0,{\bw})\\
			= &\, a_1({\bw};{\bw},{\bu}_1) \le \frac{2\mathcal{B}\|\Phi\|_{V^*}}{\nu}\|{\bw}\|_V^2.
	\end{align*}
Hence the inequality $(\nu^2 - 4\mathcal{B}\|\Phi\|_{V^*})\|{\bw}\|_V^2\le 0$, and from assumption \eqref{estimate:uniquenessnhdirichlet} we infer that ${\bw} = 0$.
\end{proof}

\section{Non-stationary Problem} 
\label{sec4}
In this section, we study the evolutionary case of the Navier--Stokes equations with the convective boundary condition. As we shall see, we derive a good energy estimate which is not achievable for the usual outflow condition, i.e., the usual do-nothing boundary condition. For the sake of brevity, we shall only consider the case with a non-homogeneous Dirichlet condition, which is the dynamic version of the system considered in Section \ref{section:2.2}. In particular,  for an interval $I= (0,T)$ for $T>0$ we consider the following system
\begin{align}
	\left\{
	\begin{aligned}
		\partial_t{\bu}-\nabla\cdot\sigma({\bu},p) + ({\bu}\cdot\nabla){\bu} & = {\blf} &&\text{in }\Omega\times I,\\
		\nabla\cdot{\bu} & = 0 &&\text{in }\Omega\times I,\\
		{\bu}(0) & = {\bu}_0 &&\text{in }\Omega,\\
		{\bu} & = 0 &&\text{on }\Gamma_H\times I,\\
		{\bu} & = {\bu}_{in} &&\text{on }\Gamma_N\times I,\\
		\sigma({\bu},p){\bn} & = \frac{1}{2}({\bu}\otimes{\bu}){\bn} &&\text{on }\Gamma_1\times I.
	\end{aligned}
	\right.
	\label{system:timenhdirichlet}
\end{align}
We note that the same assumptions in Section \ref{section:2.2} for the structure of the domain still hold in this system. Furthermore, to take account the dynamicity of the system we assume that ${\blf}\in L^2(I;V^*)$, ${\bu}_{in}\in L^2(I;H^{1/2}(\Gamma_N;\mathbb{R}^2))$, and that
\begin{align}
\int_{\Gamma_N} {\bu}_{in}(t)\cdot{\bn} \du s=0\quad \text{a.e. }t\in I\text{ including }t=0.
\label{equality:timenhomogeneous0}
\end{align}
We also utilize Lemma \ref{lemma:midinp} so that for an arbitrary $\epsilon >0$ there exists ${\bw}_0:={\bw}_0(\epsilon)\in L^2(I;H^1(\Omega;\mathbb{R}^2))$ that satisfies \eqref{properties:midinp} and \eqref{estimate:midinp}. We assume for compatibility that ${\bu}_0|_{\Gamma_N} =  {\bu}_{in}(0)$. Furthermore, to apply Lemma \ref{lemma:midinp} we assume that $ {\bu}_{in}(0) \in H^{1/2}(\Gamma_N;\mathbb{R}^2) $, which in turn compels us to suppose that ${\bu}_0\in H^1(\Omega;\mathbb{R}^2)$ due to trace theorem.
\begin{remark}
Note that the usual assumption for the initial data ${\bu}_0$ is for it to be in $L^2(\Omega;\mathbb{R}^2)$ (see \cite{temam1977}), however such assumption will not be able to handle the compatibility ${\bu}_0|_{\Gamma_N} =  {\bu}_{in}(0)$.
\end{remark}

We shall call an element $\tilde{\bu}\in W^2(V)$ a perturbed weak solution of \eqref{system:timenhdirichlet} if it satisfies the following equation 
\begin{align}
	\langle \partial_t\tilde{\bu},{\bv}\rangle_{V^*\times V} + \nu a_0(\tilde{\bu},{\bv}) + a_1(\tilde{\bu};\tilde{\bu},{\bv}) + a_1({\bw}_0;\tilde{\bu},{\bv}) +  a_1(\tilde{\bu};{\bw}_0,{\bv}) = \langle\Phi,{\bv}\rangle_{V^*\times V}\quad \forall{\bv}\in V,
	\label{weak:timenhdirichlet}
\end{align}
for almost every $t\in I$, and $\tilde{\bu}(0) = {\bu}_0 - {\bw}_0(0)$ in $H$. We note that the evaluation at $t=0$ is well-defined since $W^2(V)\subset C(\overline{I};H)$, and that the element $\Phi\in L^2(I;V^*)$ is defined as in Section \ref{section:2.2} but takes into account the time dependence of its components.

\begin{theorem}
	Suppose that $\Omega\subset\mathbb{R}^2$ is of class $\mathcal{C}^1$, ${\blf}\in L^2(I;V^*)$, ${\bu}_0\in H^1(\Omega;\mathbb{R}^2)$ with $\nabla\cdot{\bu}_0=0$ in $\Omega$, and ${\bu}_{in }\in L^2(I;H^{1/2}(\Gamma_N;\mathbb{R}^2))$ with $ {\bu}_{in}(0) = {\bu}_0|_{\Gamma_N} \in H^{1/2}(\Gamma_N;\mathbb{R}^2) $ and satisfies \eqref{equality:timenhomogeneous0}. Then, there exists $\tilde{\bu}\in W^2(V)$ that solves \eqref{weak:timenhdirichlet} and satisfies the following energy estimate
	\begin{align}
		\|\tilde{\bu}\|_{W^2(V)} \le  c(\|{\blf}\|_{L^2(I;V^*)},\|{\bu}_0\|_{H^1(\Omega;\mathbb{R}^2)},\|{\bw}_0\|_{L^2(I;H^1(\Omega;\mathbb{R}^2))}).
	\end{align}
\end{theorem}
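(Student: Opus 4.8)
The plan is to construct the solution by the Faedo--Galerkin method and to recover it in the limit through the compactness already recorded in Section~\ref{sec2}. Since $W$ is dense in $V$, I would fix a countable basis $\{\bw_k\}_{k\ge 1}\subset W$ of $V$, seek an approximation $\tilde{\bu}_m(t)=\sum_{k=1}^m g_{km}(t)\bw_k$ satisfying \eqref{weak:timenhdirichlet} tested against $\bw_1,\dots,\bw_m$, and project the initial datum $\bu_0-\bw_0(0)$ onto the span of $\bw_1,\dots,\bw_m$ in $H$. This reduces the problem to a system of ordinary differential equations for $(g_{km})_{k=1}^m$ whose right-hand side is quadratic in the coefficients, so the Carath\'eodory/Peano theorem yields a local-in-time solution; the a priori bound below then extends it to all of $I$.

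The central a priori estimate comes from testing with $\bv=\tilde{\bu}_m$, where the algebraic structure of $a_1$ does the work. Lemma~\ref{lemma:trilinearpropoerties}(ii) kills the cubic term $a_1(\tilde{\bu}_m;\tilde{\bu}_m,\tilde{\bu}_m)$, and the same antisymmetry (using $\nabla\cdot\bw_0=0$ together with $\tilde{\bu}_m=0$ on $\Gamma_0$) makes $a_1(\bw_0;\tilde{\bu}_m,\tilde{\bu}_m)$ vanish as well, the two boundary contributions on $\Gamma_1$ cancelling exactly. The remaining mixed term $a_1(\tilde{\bu}_m;\bw_0,\tilde{\bu}_m)$ is absorbed using \eqref{estimate:midinp} with $\epsilon=\nu/2$. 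Writing $\langle\partial_t\tilde{\bu}_m,\tilde{\bu}_m\rangle=\tfrac12\tfrac{d}{dt}\|\tilde{\bu}_m\|_H^2$ and $a_0(\tilde{\bu}_m,\tilde{\bu}_m)=\|\tilde{\bu}_m\|_V^2$, Young's inequality on $\langle\Phi,\tilde{\bu}_m\rangle$ yields a differential inequality of the form $\tfrac12\tfrac{d}{dt}\|\tilde{\bu}_m\|_H^2+\tfrac{\nu}{4}\|\tilde{\bu}_m\|_V^2\le\tfrac1\nu\|\Phi\|_{V^*}^2$. Integrating in time and bounding $\|\tilde{\bu}_m(0)\|_H$ and $\|\Phi\|_{L^2(I;V^*)}$ in terms of the data gives uniform bounds for $\tilde{\bu}_m$ in $L^\infty(I;H)\cap L^2(I;V)$. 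A uniform bound on $\partial_t\tilde{\bu}_m$ in $L^2(I;V^*)$ then follows from the equation: the linear terms are controlled by $\|\tilde{\bu}_m\|_V$, while \eqref{estimate:trilinear}, combined with the antisymmetry of Lemma~\ref{lemma:trilinearpropoerties}(i), bounds the convective contribution to $V^*$ by $c\|\tilde{\bu}_m\|_H\|\tilde{\bu}_m\|_V$, which lies in $L^2(I)$ by the previous bounds. Together these give a uniform bound in $W^2(V)$.

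The final step is the passage to the limit, and this is where the main difficulty lies. From the uniform $W^2(V)$ bound I would extract a subsequence with $\tilde{\bu}_m\rightharpoonup\tilde{\bu}$ in $L^2(I;V)$, $\tilde{\bu}_m\overset{\ast}{\rightharpoonup}\tilde{\bu}$ in $L^\infty(I;H)$, and $\partial_t\tilde{\bu}_m\rightharpoonup\partial_t\tilde{\bu}$ in $L^2(I;V^*)$; the Aubin--Lions lemma (compact embedding $W^2(V)\hookrightarrow\hookrightarrow L^2(I;H)$) upgrades this to strong convergence in $L^2(I;H)$, which suffices to identify the limit of the interior convection term exactly as in the weak-continuity argument of Theorem~\ref{theorem:wphdirichlet}. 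The genuinely delicate point is the nonlinear boundary term $\int_{\Gamma_1}(\tilde{\bu}_m\cdot\bn)(\tilde{\bu}_m\cdot\bv)\,\du s$: weak convergence in $L^2(I;V)$ is not enough, and I would need strong convergence of the traces in $L^2(I;L^2(\Gamma_1;\mathbb{R}^2))$. I expect to obtain this from an Aubin--Lions--Simon argument built on the \emph{compact} trace embedding $V\hookrightarrow\hookrightarrow L^2(\Gamma_1;\mathbb{R}^2)$ (the time analogue of \eqref{convergence:rellich}), which together with the $L^2(I;V^*)$ bound on $\partial_t\tilde{\bu}_m$ yields trace compactness in space and time. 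Once both nonlinear limits are identified, I would pass to the limit first for test functions in the basis and then extend to all $\bv\in V$ by density, verify the initial condition $\tilde{\bu}(0)=\bu_0-\bw_0(0)$ in $H$ using $W^2(V)\subset C(\overline{I};H)$, and deduce the stated energy estimate from the uniform bounds by weak lower semicontinuity of the norms.
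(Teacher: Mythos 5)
Your proposal follows essentially the same route as the paper: a Faedo--Galerkin projection onto a basis of $V$, the a priori estimate obtained by diagonal testing (with the cubic term killed by Lemma \ref{lemma:trilinearpropoerties}(ii), the mixed term $a_1(\tilde{\bu}_m;\bw_0,\tilde{\bu}_m)$ absorbed via Lemma \ref{lemma:midinp}, and a bound on $\partial_t\tilde{\bu}_m$ in $L^2(I;V^*)$ from \eqref{estimate:trilinear}), followed by passage to the limit using the compact embedding $W^2(V)\hookrightarrow L^2(I;H)$ and weak lower semicontinuity for the energy estimate. If anything, you are more explicit than the paper about identifying the limit of the nonlinear boundary term through trace compactness, a point the paper subsumes under ``classical sequential arguments.''
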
 

\begin{proof}
We approach the proof by utilizing an orthonormal basis $\{{\bv}_k\}$ of $V$, and project the problem on the subspace $V_n := \mathrm{span}\{{\bv}_k\}_{k=1}^n$. Furthermore, the solution of the projected problem may be written as $\tilde{\bu}_n = \sum_{k=1}^n \alpha_k^{(n)}(t){\bv}_k $ and the projected problem can be written as the initial value problem
\begin{align*}
\left\{
\begin{aligned}
	\langle \partial_t\tilde{\bu}_n,{\bv}_k\rangle_{V^*\times V} + \nu a_0(\tilde{\bu}_n,{\bv}_k) + a_1(\tilde{\bu}_n;\tilde{\bu}_n,{\bv}_k) + a_1({\bw}_0;\tilde{\bu}_n,{\bv}_k) +  a_1(\tilde{\bu}_n;{\bw}_0,{\bv}_k) &= \langle\Phi,{\bv}_k\rangle_{V^*\times V},\\
	\tilde{\bu}_n(0) &= \tilde{\bu}_{0n},
\end{aligned}
\right.
	\label{weak:projectedtimenhdirichlet}
\end{align*}
for all $k=1,\ldots,n,$ and where $\tilde{\bu}_{0n}$ is a projection of ${\bu}_0 - {\bw}_0(0)$ over the space $V_n$ (for example by the Leray projection operator). This differential equation can be easily shown to have a solution in an interval $[0,t_n]$ by the virtue, for example, of Picard's theorem. Fortunately, the following {\it a priori} estimates will show that $t_n = T$. Since $\tilde{\bu}_n \in V_n$, we infer from the first equation of the previous system that 
\begin{align}
	\frac{1}{2}\frac{d}{dt}\|\tilde{\bu}_n\|_H^2 + \nu \|\tilde{\bu}_n\|_V^2 + a_1(\tilde{\bu}_n;{\bw}_0,\tilde{\bu}_n) \le \frac{1}{\nu}\|\Phi\|_{V^*}^2 + \frac{\nu}{4}\|\tilde{\bu}_n\|_V^2.
\end{align}
Now, by choosing the lifting of the input function ${\bu}_{in}$ with $\epsilon = \nu/4$, then by taking the integral over an interval $[0,t]$ for $t<t_n$ of both sides of the resulting inequality will give us 
\begin{align}
	 \|\tilde{\bu}_n(t)\|_H^2 + \nu\int_0^t\|\tilde{\bu}_n(s)\|_V^2 \du s \le \|\tilde{\bu}_{0n}\|_H^2 + \frac{2}{\nu}\int_0^t\|\Phi(s)\|_{V^*}^2\du s.
	 \label{equation:4.5}
\end{align}
Furthermore, from H{\"o}lder's inequality, and \eqref{estimate:trilinear} of Lemma \ref{lemma:trilinearpropoerties}, we infer that 
\begin{align*}
	\|\partial_t{\bu}_n\|_{L^2(I;V^*)} \le c((1+\|{\bu}_n\|_{L^2(I;V)})\|{\bu}_n\|_{L^2(I;V)} + \|\Phi\|_{L^2(I;V^*)}).
\end{align*}
Since $\|\tilde{\bu}_{0n}\|_H \le c \|{\bu}_0 - {\bw}_0(0)\|_H$, and from the assumptions on the external force ${\blf}$ and the lifting of the input function ${\bu}_{in}$, we infer that $\|\tilde{\bu}_n\|_{W^2_I(V)} \le c(\|{\bu}_0 - {\bw}_0(0)\|_H,\|{\Phi}\|_{V^*})$, with $I = [0,T]$. These imply that the initial value problem admits a solution over the whole interval $I$. Furthermore, estimate \eqref{equation:4.5} aids us to infer that there exists $\tilde{\bu}\in L^2(I;V)\cap L^\infty(I;H)$ such that 
\begin{align*}
	\left\{
		\begin{aligned}
			\tilde{\bu}_n & \rightharpoonup \tilde{\bu} &&\text{in }L^2(I;V),\\
			\tilde{\bu}_n & \ws \tilde{\bu} &&\text{in }L^\infty(I;H),\\
			\tilde{\bu}_n &\to \tilde{\bu}	&&\text{in }L^2(I;H),
		\end{aligned}
	\right.
\end{align*}
where the third convergence is due to the compact embedding $W^2(V)\hookrightarrow L^2(I;H)$. Using classical sequential arguments, one can prove that $\tilde{\bu}\in W^2(V)$ satisfies \eqref{weak:timenhdirichlet}. We further note that \eqref{equation:4.5} and the fact that $\|\tilde{\bu}_{0n}\|_H \le c \|{\bu}_0 - {\bw}_0(0)\|_H$ imply that the following energy estimate holds
\begin{align}
	 \|\tilde{\bu}(t)\|_H^2 + \nu\int_0^t\|\tilde{\bu}(s)\|_V^2 \du s \le \|{\bu}_0 \|_{L^2(\Omega;\mathbb{R}^2)}^2 + \|{\bw}_0(0) \|_{L^2(\Omega;\mathbb{R}^2)}^2  + \frac{2}{\nu}\int_0^t\|\Phi(s)\|_{V^*}^2\du s.
\end{align}
\end{proof}

For the uniqueness of the solution, we note that given $\tilde{\bu}_1,\tilde{\bu}_2\in W^2(V)$ that solve \eqref{weak:timenhdirichlet}, then the difference $\bw = \tilde{\bu}_1-\tilde{\bu}_2\in W^2(V)$ solves the equation
\begin{align}
			\langle \partial_t{\bw}, {\bv}\rangle_{V^*\times V} + \nu a_0({\bw},{\bv}) + a_1({\bw};\tilde{\bu}_1,{\bv}) + a_1(\tilde{\bu}_2;{\bw},{\bv}) + a_1({\bw}_0;{\bw},{\bv}) + a_1({\bw};{\bw}_0,{\bv}) & = 0,	
			\label{weak:differencetime}
\end{align}
for almost every $t\in I$ and ${\bw}(0) = 0$. By performing diagonal testing and utilizing Lemma \ref{lemma:trilinearpropoerties}(ii) on \eqref{weak:differencetime} yields
\begin{align*}
	\frac{1}{2}\frac{d}{dt}\|{\bw}(t)\|_H^2 + \nu \|{\bw}(t)\|_V^2 =  -a_1({\bw}(t); \tilde{\bu}_1(t)+{\bw}_0(t),{\bw}(t)).
\end{align*}
Integrating the equation above over the interval $[0,t]$ for $t<T$, utilizing the estimate in Lemma \ref{lemma:trilinearpropoerties} and Young's inequality, we then get 
\begin{align*}
	\|{\bw}(t)\|_H^2 \le \frac{c}{2\nu}\int_0^t \|\tilde{\bu}_1(t)+{\bw}_0(t)\|_V^2\|{\bw}(t)\|_H^2\du t.
\end{align*}
Lastly, from Gronwall's inequality we infer that ${\bw}(t)\equiv 0$ in $H$, and thus $\tilde{\bu}_1 = \tilde{\bu}_2.$

\section{Numerical Examples}\label{sec5}
In this section, we shall illustrate some numerical solutions generated by the Navier--Stokes equations with the convective boundary condition. We shall illustrate simulations of the three systems whose analyses we just previously exposed.  

For a domain $\Omega\subset\mathbb{R}^2$, we consider a regular triangulation $\mathcal{T}_h = \{K\}$ of $\overline{\Omega}$. We also consider the spaces 
\[X_h := \{{\bv}_h \in C(\overline{\Omega};\mathbb{R}^2): {\bv}_h|_K \in \mathbb{P}^2(K;\mathbb{R}^2),\, \forall K\in \mathcal{T}_h  \},\] 
\[M_h := \{q_h \in C(\overline{\Omega};\mathbb{R}): q_h|_K \in \mathbb{P}^1(K;\mathbb{R}),\, \forall K\in \mathcal{T}_h  \},\]
$V_h :=  X_h \cap H_{\Gamma_0}^1(\Omega;\mathbb{R}^d) $, $Q_h := M_h \cap L^2(\Omega;\mathbb{R})$, and $Y_h := V_h\times Q_h$, where $\mathbb{P}^m(K;\mathbb{R}^d)$ is the spaces of $k$-degree polynomial functions on $K\in \mathcal{T}_h$.

\subsection{Stationary Case: Homogeneous Dirichlet}
In this part, we shall illustrate simulations of the system considered in Section \ref{section:2.1}. 
We shall employ Newton's method to circumvent the issue of the nonlinearity of the system.
We begin with the velocity-pressure operator $\mathbb{E}_H:Y_h\to Y_h^*$ of \eqref{system:hdirichlet} given by 
\begin{align}
	\begin{aligned}
	\langle \mathbb{E}_H({\bu}_h,p_h), ({\bv}_h,q_h) \rangle_{Y_h\times Y_h^* } =&\,  \nu a_0({\bu}_h,{\bv}_h) + a_1({\bu}_h;{\bu}_h,{\bv}_h)+ b({\bv}_h,p_h)\\ & + b({\bu}_h,q_h)   -  \langle{\blf},{\bv}_h\rangle_{V_h\times V_h^*},
	\end{aligned}
	\label{vpform:hdirichlet}
\end{align}
where $b({\bu},q) := -\int_\Omega q\dive{\bu}\du x$. Since the operator $b$ satisfies the inf-sup condition, 
the first component of the solution to $\mathbb{E}_H({\bu}_h,p_h) = 0$ also solves \eqref{weak:hdirichlet} in its discretized version.  Furthermore, the Fr{\'e}chet derivative $\mathbb{E}_H':Y_h\times Y_h\to Y_h^*$ at a point $({\bu}_h,p_h)\in Y_h$ can be determined easily and is given by 
\begin{align}
	\begin{aligned}
	\langle \mathbb{E}_H'({\bu}_h,p_h)(\delta{\bu}_h,\delta p_h), ({\bv}_h,q_h) \rangle_{Y_h^*\times Y_h} =&\,  \nu a_0(\delta{\bu}_h,{\bv}_h) + a_1(\delta{\bu}_h;{\bu}_h,{\bv}_h)+ a_1({\bu}_h;\delta{\bu}_h,{\bv}_h)\\ & + b({\bv}_h,\delta p_h) + b(\delta{\bu}_h,q_h).
	\end{aligned}
	\label{frechet:hdirichlet}
\end{align}
Furthermore, $\mathbb{E}_H'({\bu}_h,p_h) $ is an isomorphism from $Y_h$ to $Y_h^*$ given that the uniqueness assumption \eqref{estimate:uniquenesshdirichlet} holds. With this regard, for a given $({\bu}_h^n,p_h^n)\in Y_h$ a Newton iterate $({\bu}_h^{n+1},p_h^{n+1})\in Y_h$ is determined as the solution to the variational equation
\begin{align}
	\begin{aligned}
	\langle \mathbb{E}_H'({\bu}_h^n,p_h^n)(\delta{\bu}_h^{n+1},\delta p_h^{n+1}), ({\bv}_h,q_h) \rangle_{Y_h\times Y_h^* } = - \langle \mathbb{E}_H({\bu}_h^n,p_h^n), ({\bv}_h,q_h) \rangle_{Y_h\times Y_h^* }\quad \forall({\bv}_h,q_h)\in Y_h,
	\end{aligned}
	\label{newton:hdirichlet}
\end{align}
where $(\delta{\bu}_h^{n+1},\delta p_h^{n+1}) = ({\bu}_h^{n+1},p_h^{n+1})-({\bu}_h^{n},p_h^{n}) $.

This system is solved in the domain $\Omega = [0,1]^2$, and the outflow boundary is chosen to be the left wall of the domain. Furthermore, we used the same external force that was used in \cite{braack2014} to illustrate the difference between the boundary flow induced by the usual do-nothing condition and our proposed convective condition, i.e., we take ${\blf} = (f_1, f_2)$ with $f_1 = \sin(x) + \sin(y)$ and $f_2 = 0$. We note that in such set-up, the system is steered in such a way that inflow occurs on the upper part of the boundary while an outflow is imposed on the lower part. Figures \ref{figure:homogeneous} and \ref{figure:homogeneous2} compares the do-nothing and convective conditions for different values of the viscosity constant $\nu$, in particular, Figure \ref{figure:homogeneous} exhibits comparisons with $\nu = 1, 1/10, 1/20, 1/30, 1/40$ while the other figure shows simulations where $\nu = 1/50, 1/60, 1/70, 1/80, 1/90$.

\begin{figure}[ht!]
 \centering
  \includegraphics[width=\textwidth]{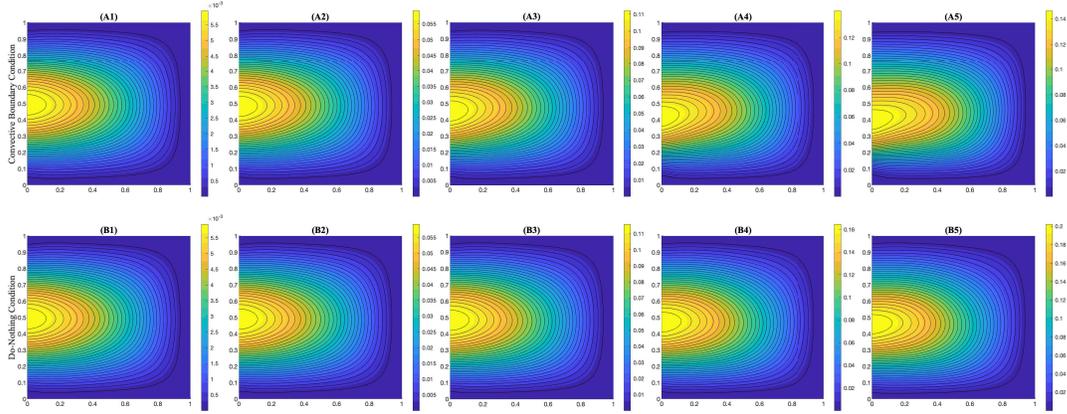}
 \caption{The figure shows simulations of system \eqref{system:hdirichlet} using CBC with viscosity constants $\nu = 1$, $\nu = 1/10$, $\nu = 1/20$, $\nu = 1/30$,$\nu = 1/40$ (A1-A5); and using the usual do-nothing condition instead of CBC on $\Gamma_1$ with the same values of $\nu$ (B1-B5)}
 \label{figure:homogeneous}
 \end{figure}

Initially, we can observe on the first three columns of Figure \ref{figure:homogeneous} (i.e., (A1)-(A3) and (B1)-(B3)) that the flows for both boundary conditions exhibit the same behaviour. This is an expected behaviour since advective effects on fluids are neglected for low values of Reynold's number, which compels both flows to mimic Stokes flow. 

Meanwhile, the right-side columns of Figure \ref{figure:homogeneous} (i.e., (A4)-(A5) and (B4)-(B5)) illustrates simulations with lower viscosity constants. In this case, the difference between the flows induced by the usual do-nothing condition and CBC are apparent. In fact, one can observe convergence of the outflow around $y = 0.1$ for the CBC, while linear outflow behaviour is observed on the system governed with the do-nothing condition.

\begin{figure}[ht!]
 \centering
  \includegraphics[width= \textwidth]{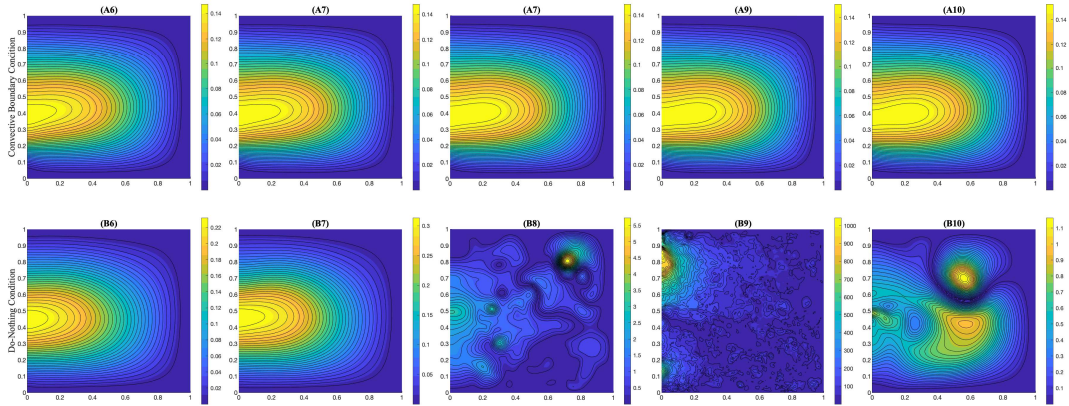}
 \caption{The figure shows simulations of system \eqref{system:hdirichlet} using CBC with viscosity constants  $\nu = 1/50$, $\nu = 1/60$, $\nu = 1/70$, $\nu = 1/80$, $\nu = 1/90$ (A6-A10); and using the usual do-nothing condition instead of CBC on $\Gamma_1$ with the same values of $\nu$ (B6-B10)}
 \label{figure:homogeneous2}
 \end{figure}

The convergence of outflow may also be observed on the figures on the first row of Figure \ref{figure:homogeneous2}, i.e., (A6)-(A10). Furthermore, if we look at (A6), (A7), (B6) and (B7) this outflow convergence causes the fluid rotation to dissipate, while the rotation induced by the do-nothing condition seems to be symmetrical. Aside from the observed difference in flow, Figure \ref{figure:homogeneous2} also shows the stability of Newton's method using CBC, i.e., Newton's iterations do not converge in the case of do-nothing condition for $\nu = 1/70, 1/80, 1/90$.

\begin{figure}[ht!]
 \centering
  \includegraphics[width=.8\textwidth]{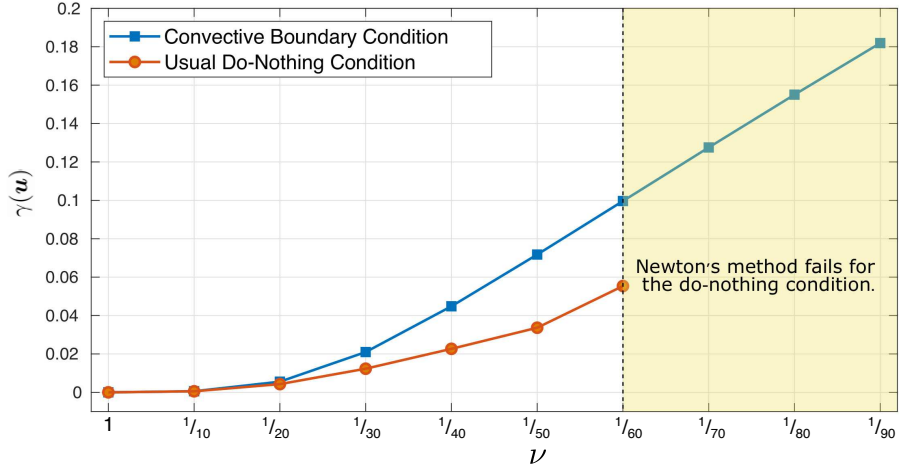}
 \caption{ Plots of the nonlinear outflow induced by CBC and the usual do-nothing boundary condition versus $x$, where $x = 0$ for $\nu = 1$, and $x = 1/(10\nu)$ otherwise.  }
 \label{figure:homogeneousoutflowcomparison}
 \end{figure}

To quantify the difference between the two boundary conditions, we also computed the nonlinear outflow $\gamma({\bu})$ of a velocity ${\bu}$ which is solved as 
\begin{align}
	\gamma({\bu}) = \int_{\Gamma_0} ({\bu}\cdot{\bn})_+{\bu} \du s,
\end{align}
where $({\bu}\cdot{\bn})_+ = {\bu}\cdot{\bn} - ({\bu}\cdot{\bn})_-$. From Figure \ref{figure:homogeneousoutflowcomparison}, we can see the similarity of the outflows for lower Reynold's number, specifically for $\nu = 1, 1/10, 1/20$. The glaring difference occurs for lower values of $\nu$, which can be attributed to the bulk motion of fluid which we have visually observed. To be precise, the high values of the nonlinear outflow is due to the fact that CBC takes into account the convective forces around the boundary.

Lastly, we point out that due to stability estimate \eqref{energy:hdirichlet}, we are assured with converging iterations for Newton's method. Using the usual do-nothing condition, on the other hand, does not give us the same assurance as the proposed boundary condition. Again, this is observed on the lower rows of Figure \ref{figure:homogeneous2} as the last three simulations fail to converge to a solution.

\begin{remark}
We end this subsection by recalling certain results in \cite{braack2014}. We point out that in the aforementioned reference, they showed lower nonlinear outflow using their directional do-nothing condition as compared to the usual do-nothing. Furthermore, they highlighted that this may have been caused by some stability property caused by the boundary condition they proposed. Nevertheless, we also mention that due to the discontinuous nature of their boundary condition, utilization of full Newton's method is quite challenging. This in fact, may cause slower convergence as compared to the boundary condition we are proposing, where we can linearize the whole nonlinearity of the expression. This fact will be the focus of future studies.
\end{remark}

\subsection{Stationary Case: Non-homogeneous Dirichlet}
In this part, we shall show numerical examples of the system considered in Section \ref{section:2.2}. Again, due to the nonlinear nature of the system, we employ Newton's method. We consider the following non-homogeneous velocity-pressure operator $\mathbb{E}_N : Y_h\to Y_h^*$ given by
\begin{align}
\begin{aligned}
	\langle \mathbb{E}_N(\tilde{\bu}_h,p_h),({\bv}_h,q_h)\rangle_{Y_h^*\times Y_h} = &\, \nu a_0(\tilde{\bu}_h,{\bv}_h) + a_1(\tilde{\bu}_h;\tilde{\bu}_h,{\bv}_h) + a_1({\bw}_{0h};\tilde{\bu}_h,{\bv}_h)\\ & + a_1(\tilde{\bu}_h;{\bw}_{0h},{\bv}_h) + b({\bv}_h, p_h) + b(\tilde{\bu}_h,q_h) - \langle \Phi,{\bv}_h\rangle_{V^*\times V},
	\end{aligned}
\label{vpform:nhdirichlet}
\end{align}
where ${\bw}_{0h}\in X_h$ is the projection of the lifting ${\bw}_0 \in H^1(\Omega;\mathbb{R}^2)$ of the input function. Similarly with the homogeneous case, we utilize the Fr{\'e}chet derivative of $\mathbb{E}_N$ to induce the Newton formulation, this gives us
 \begin{align}
	\begin{aligned}
	\langle \mathbb{E}_N'(\tilde{\bu}_h^n,p_h^n)(\delta\tilde{\bu}_h^{n+1},\delta p_h^{n+1}), ({\bv}_h,q_h) \rangle_{Y_h\times Y_h^* } = - \langle \mathbb{E}_N(\tilde{\bu}_h^n,p_h^n), ({\bv}_h,q_h) \rangle_{Y_h\times Y_h^* }\quad \forall({\bv}_h,q_h)\in Y_h,
	\end{aligned}
	\label{newton:nhdirichlet}
\end{align}
where $(\delta\tilde{\bu}_h^{n+1},\delta p_h^{n+1}) = (\tilde{\bu}_h^{n+1},p_h^{n+1})-(\tilde{\bu}_h^{n},p_h^{n}) $. Here, the derivative is explicitly solved as 
\begin{align}
\begin{aligned}
	\langle \mathbb{E}_N'(\tilde{\bu}_h,p_h)&(\delta\tilde{\bu}_h,\delta p_h),({\bv}_h,q_h)\rangle_{Y_h^*\times Y_h} =  \nu a_0(\delta\tilde{\bu}_h,{\bv}_h) + a_1(\delta\tilde{\bu}_h;\tilde{\bu}_h,{\bv}_h)+ a_1(\tilde{\bu}_h;\delta\tilde{\bu}_h,{\bv}_h) \\ &  + a_1({\bw}_{0h};\delta\tilde{\bu}_h,{\bv}_h) + a_1(\delta\tilde{\bu}_h;{\bw}_{0h},{\bv}_h) + b({\bv}_h,\delta p_h) + b(\delta\tilde{\bu}_h,q_h) - \langle \Phi,{\bv}_h\rangle_{V^*\times V}.
	\end{aligned}
\label{frechet:nhdirichlet}
\end{align}

We solve this system in a bifurcation geometry with varying values of $\nu>0$ as shown in Figure \ref{figure:nonhomogeneous}. Furthermore, the input boundary is defined as $\Gamma_N = \{(x,y)\in\mathbb{R}^2: x=0, -1/2 \le y \le 1/2\}$ with the input function ${\bu}_{in} = ((1/2 - y)(1/2+y),0)$. The output boundary $\Gamma_1$ is defined as $\Gamma_1 = \{ (x(t),y(t))\in \mathbb{R}^2: x(t) = t, y(t) = \pm (t-7.5), 6\le t\le 6.5 \}$, while the segments of the wall boundary $\Gamma_H$ are defined so as to illustrate a bifurcation geometry (see Figure \ref{figure:nonhomogeneousdomain}).

\begin{figure}[ht!]
 \centering
  \includegraphics[width=.6\textwidth]{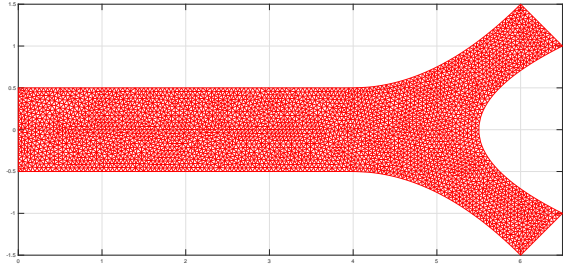}\vspace{-.2in}
 \caption{ Domain discretization for the resolution of system \eqref{system:nhdirichlet} }
 \label{figure:nonhomogeneousdomain}
 \end{figure}


We compare the flow - in terms of the streamlines - induced by \eqref{newton:nhdirichlet} with the flows induced using the do-nothing and the directional do-nothing condition. The flow from the do-nothing condition is solved by removing the boundary integrals on the definitions of $\mathbb{E}_N$ and $\mathbb{E}'_N$. A quasi-Newton method is utilized for the directional do-nothing by fully linearizing the domain quadratic term, while only considering the term $\int_{\Gamma_1} (\tilde{\bu}_h^{n}\cdot{\bn})_{-}\delta\tilde{\bu}_h^{n+1}\du s$ instead of the fully linearized version which is
\begin{align*}
\int_{\Gamma_1} (\tilde{\bu}_h^{n}\cdot{\bn})_{-}\delta\tilde{\bu}_h^{n+1} +  (\delta\tilde{\bu}_h^{n+1}\cdot{\bn})_{-} \tilde{\bu}_h^{n}\du s,
\end{align*}
since it would be challenging to solve for $\delta\tilde{\bu}_h^{n+1}$ in the term $(\delta\tilde{\bu}_h^{n+1}\cdot{\bn})_{-}$ from usual Galerkin methods.

\begin{figure}[ht!]
 \centering
  \includegraphics[width=1\textwidth]{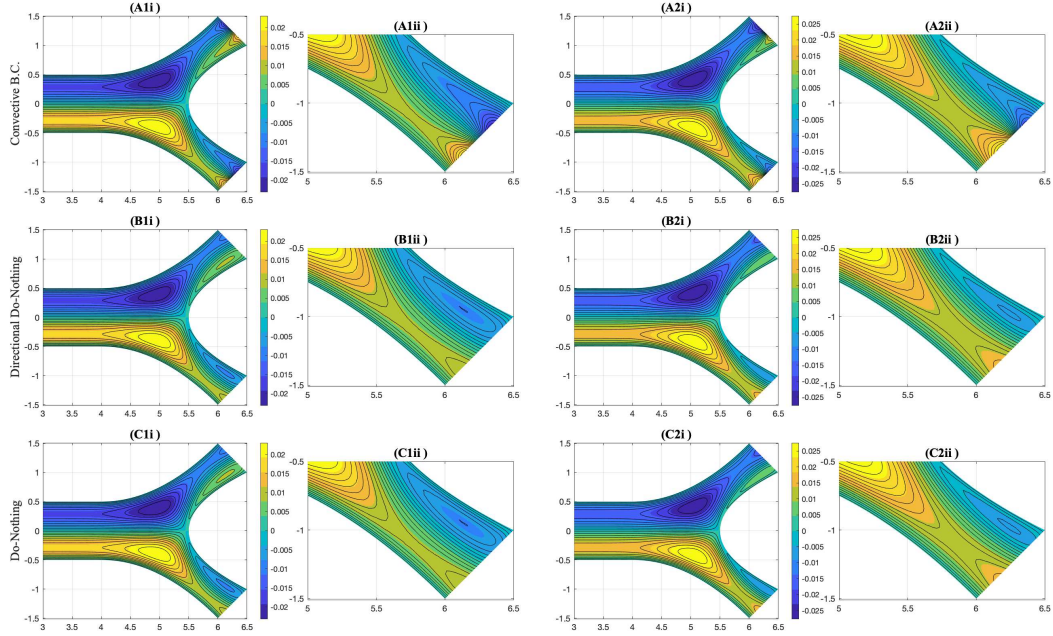}\vspace{-.5in}
 \caption{The figure shows simulations of system \eqref{system:nhdirichlet} using CBC with viscosity constants  $\nu = 1/250$ {\rm (A1i)}, $\nu = 1/1000$ {\rm (A2i)} and their respective zoomed in lower branches {\rm(A1ii)} and {\rm(A2ii)}; using the directional do-nothing condition instead of CBC on $\Gamma_1$ with the same values of $\nu = 1/250, 1/1000$ {\rm (B1i)}-{\rm(B2i)}  and their respective zoomed in lower branch {\rm(B1ii)} and {\rm(B2ii)}; and using the usual do-nothing condition instead of CBC on $\Gamma_1$ with the same values of $\nu = 1/250, 1/1000$ {\rm(C1i)}-{\rm(C2i)}  and their respective zoomed in lower branch {\rm(C1ii)} and {\rm(C2ii)} }
 \label{figure:nonhomogeneous}
 \end{figure}

As can be observed in Figure \ref{figure:nonhomogeneous} - where the values  $\nu = 1/250$ and $\nu = 1/1000$ are used - the advect effect on the boundary are now obvious as the viscosity constant are small. Although the flows using CBC and the do-nothing condition looks almost similar, we can still see bulking of fluid motion for our proposed condition on the boundary $\Gamma_1$ (see Figure \ref{figure:nonhomogeneous} (A1ii) and (A2ii)).  This fluid bulking causes the streamline rotations near the said boundary - which are present in the flows induced from the usual do-nothing and the directional do-nothing - to dissipate, whereas linear outflows are observed on the other streamlines (see Firgure \ref{figure:nonhomogeneous}(B1ii), (B2ii), (C1ii), and (C2ii)). We also mention that such bulking of fluid motion may also be observed on the Navier--Stokes flows with the total pressure boundary condition, see \cite{heywood1992} for illustration.
 
{\bf Note:} The streamline circulations observed near the point $(\pm 0.25, 5)$ are not necessarilty caused by vortices, they are rather caused by high acceleration (caused by the nonlinear term on the governing state of the velocity field ${\bu}$) which causes the streamlines seem to disconnect at lower values of contour levels. Such streamline circulations are also observed near the outflow boundary and should not be mistaken as fluid backflow.

\subsection{Time-Dependent Case}

Simulating time-dependent  problems can be done in several methods. Nevertheless, we shall employ what is known as the Lagrange--Galerkin method, which takes advantage of the approximation of the material derivative of the fluid velocity. Here,  for a given velocity field ${\bu}$, we consider a function $X:(0,T)\to \mathbb{R}^2$ that solves the characteristic equation $\frac{dX}{dt} = {\bu}(X,t). $
This implies that 
\[ 
 \partial_t{\bu} + ({\bu}\cdot\nabla){\bu} =: \frac{D{\bu}}{Dt}(X(t),t) = \frac{d}{dt}{\bu}(X(t),t),
\]
for sufficiently smooth ${\bu}$. 

Let $\Delta t$ be a time increment, and $t^n := n\Delta t$ for $n\in\mathbb{N}\backslash\{0\}$. We denote by $h^n$ the evaluation $h(\cdot,t^n)$ of a function $h:\Omega\times(0,T)$. For a point $x\in \mathbb{R}^2$, we denote by $X(\cdot;x,t^n)$ the solution to the characteristic equation with initial condition $X(t^n) = x$.  We shall utilize the upwind point of $x$ with respect to the velocity ${\bu}$ given by $\vec{X}({\bu},\Delta t)(x):= x - {\bu}\Delta t$. In fact, the point $\vec{X}({\bu}^{n-1},\Delta t)(x)$ approximates $X(t^{n-1};x,t^n)$, for $n\in\mathbb{N}$, so that 
\[
\frac{D{\bu}}{Dt} (x,t^n) = \frac{d}{dt}{\bu}(X(t),t)\big|_{t=t^n} \approx \frac{{\bu}^n - {\bu}^{n-1}\circ\vec{X}({\bu}^{n-1},\Delta t) }{\Delta t}(x).
\]
The nonlinearity on the boundary on the other will be solved using an implicit-explicit scheme, i.e.,we shall use the following approximation:
\[
	\int_{t^{n-1}}^{t^n} \int_{\Gamma_1} ({\bu}\cdot{\bn})({\bu}\cdot{\bv}) \du s\du t \approx \Delta t\int_{\Gamma_1} ({\bu}^{n-1}\cdot{\bn})({\bu}^{n}\cdot{\bv}) \du s \quad \forall{\bv}\in V.
\]

Let $N = \lfloor T/\Delta t \rfloor$ be the number of time steps, and $({\bu}_h^0,p_h^0)\in V_h\times Q_h$ be a projection of $({\bu}_0,0)\in V\times L^2(\Omega;\mathbb{R}) $, we propose a Lagrange-Galerkin scheme that approximates the solution to \eqref{system:timenhdirichlet} by solving for $\{ ({\bu}_h^n,p_h^n) \}_{n=1}^N \subset V_h\times Q_h$, that satisfies for each $n = 1,2,\ldots,N$ the equation
\begin{align}
\begin{aligned}
	\int_\Omega \frac{{\bu}^n_h - {\bu}^{n-1}_h\circ\vec{X}({\bu}^{n-1}_h,\Delta t) }{\Delta t} \cdot {\bv}_h\du x + \nu a_0({\bu}_h^n,{\bv}_h) + b({\bv}_h,p_h^n)&\\
	+ b({\bu}_h^n,q_h) - \int_{\Gamma_1} ({\bu}^{n-1}_h\cdot{\bn})({\bu}^{n}_h\cdot{\bv}_h) \du s = \int_\Omega {\blf}^n\cdot {\bv}_h \du x&,
\end{aligned}\qquad \forall({\bv}_h,q_h)\in V_h\times Q_h.
\end{align}

The analysis of existence, stability, and convergence of the approximation is beyond the scope of this paper. Nevertheless, we present an implementation of such scheme.  In particular, we illustrate flow past a cylinder. Here, the channel is a rectangle with vertices $(-1.5,-1)$, $(-1.5,1)$, $(1,6)$, and $(-1,6)$. Meanwhile the cylinder is a circle centered at $(0,0)$ and with radius $r = 0.15$. The Dirichlet data on $\Gamma_N:=\{(x,y)\in\mathbb{R}^2: x = -1.5, -1\le y\le 1 \}$ is defined as ${\bu}_{in}(t) = ((1-y)(1+y),0)$, while the outflow boundary is defined as $\Gamma_1:= \{(x,y)\in\mathbb{R}^2: x = 6, -1\le y\le 1 \} $, and the viscosity constant is $\nu = 1/250$.


 \begin{figure}[ht!]
 \centering
 \includegraphics[width=\textwidth]{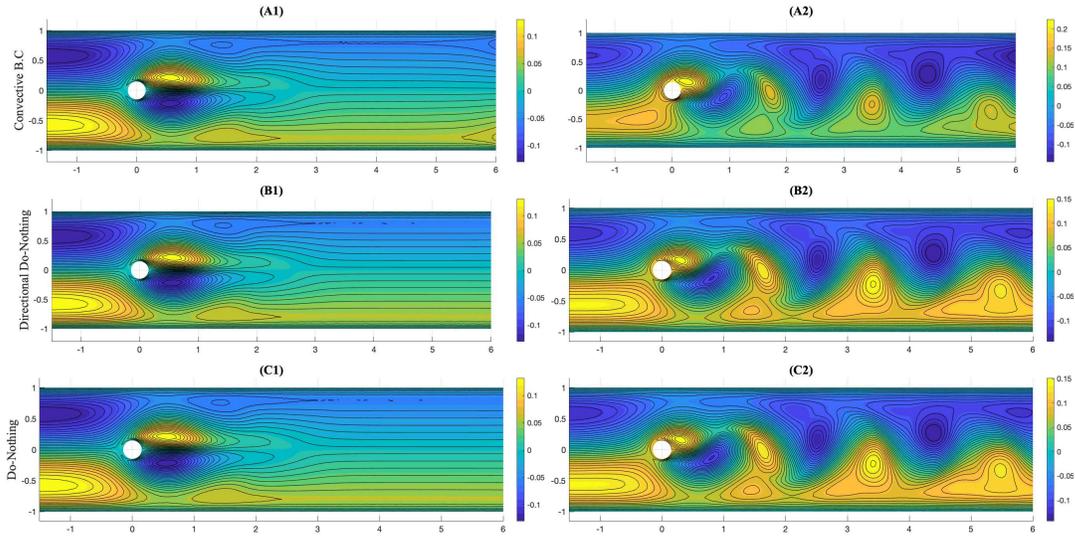} \vspace{-.5in}
 \caption{The figure shows simulations of system \eqref{system:timenhdirichlet} using CBC with viscosity constant  $\nu = 1/250$ captured at times $t=4$(A1), and $t=24$ (A2); using the directional do-nothing condition instead of CBC on $\Gamma_1$ with the same value of $\nu$ and same time-captures (B1-B2); and using the usual do-nothing condition instead of CBC on $\Gamma_1$ with the same value of $\nu$ and same time-captures (C1-C2) }
 \label{figure:timedependent}
 \end{figure}

We see from Figure \ref{figure:timedependent}(A1) that the convective effect on the boundary $\Gamma_1$ is apparent for system \eqref{system:timenhdirichlet} at time $t=4$, while the flows induced using the directional and the usual do-nothing conditions behave in a more linear way, see Figure \ref{figure:timedependent}(B1) and (C1), respectively. These phenomena are also observed during the shedding of Karman vortex at $t=24$. In particular, we observe from Figure \ref{figure:timedependent}(A2) that the vortex located around the point $(5.5,-0.5)$ seems to dissipate due to the convective boundary condition, while dominant vortex cores are observable for the directional and the usual do-nothing conditions,  see Figures \ref{figure:timedependent}(B2) and (C2), respectively.

\section{Conclusion}

In this paper, we first established existence of weak solution for the stationary systems \eqref{system:hdirichlet} and \eqref{system:nhdirichlet}. The analysis for the system with the homogeneous Dirichlet data was done in a straightforward manner, while the analysis for \eqref{weak:nhdirichlet} was accomplished with the aid of Lemma \ref{lemma:midinp} which was an analogous result to \cite[Lemma IV.2.3]{girault1986} but takes into account the estimate on the boundary integral. 
We also obtained uniqueness of solutions for both systems given that either the given source functions are sufficiently small or the viscosity constant $\nu$ is sufficiently large.

We also analyzed existence and uniqueness for the time-dependent problem. Unlike the usual assumption that the initial data ${\bu}_0$ is in $L^2(\Omega;\mathbb{R}^2)$, we found out that the necessary assumption is for it to be at least first differentiable. The reason for this is to take into account the compatibility with the Dirichlet data on $\Gamma_N$ at the time $t=0$. 

Lastly, we illustrated numerical examples for the three systems we analyzed. For the stationary system with homogeneous Dirichlet condition, we illustrated how our system induces velocity fields that take into account the convective forces on the boundary $\Gamma_1$ by comparing it with the solutions using the usual do-nothing condition. Such convective effects were also observed for the illustrations of systems \eqref{system:nhdirichlet} and \eqref{system:timenhdirichlet}, which we both compared to flows induced by the directional and usual do-nothing conditions. The time-dependent problems were all simulated by utilizing a Lagrange-Galerkin method.

\section*{Appendix}
\appendix
\renewcommand{\thesection}{A}

The resolution of the stream function ${\psi} : \Omega\to\mathbb{R}$ on boundaries with Dirichlet data can be quite easily done by solving the values of said function explicitly. However, solving for such function with Convective (also called Robin) and Neumann type boundary conditions can be {\it challenging} since there is no explicit form for the velocity field on such boundaries. Here, we lay out the method by which we solved the streamlines shown in the simulations above. In particular, we solve the following variational problem: {\it For a given velocity field ${\bu}\in L^2(\Omega;\mathbb{R}^2)$, find $\psi\in H^1(\Omega;\mathbb{R})$ that sarisfies}
\begin{align}
	\int_{\Omega}\nabla\psi\cdot \nabla\phi \du x = \int_{\Omega}{\bu}\cdot(\nabla\times \phi)\quad \forall \phi\in H^1_{\Gamma_0}(\Omega;\mathbb{R}),
	\label{weak:streams}
\end{align}
{\it with $\psi|_{\Gamma_0} = g$ for an appropriate $g\in H^{1/2}(\Gamma_0;\mathbb{R})$, which is chosen according to the Dirichlet data imposed on the velocity field.} Here the curl for a scalar valued function is defined as $\nabla\times\phi = (\frac{\partial \phi}{\partial x_2},-\frac{\partial \phi}{\partial x_1})$.  The impetus for such formulation is the assumption that ${\bu} = \nabla\times \psi$, this implies that the derivative of $\psi$ on the boundary $\Gamma_1$ in the direction ${\bn}$ can then be solved as $\nabla\phi \cdot {\bn} = -{\bn}\times {\bu}$, hence we have the following system
\begin{align}
	\left\{
		\begin{aligned}
			-\Delta \phi & = \nabla\times{\bu} &&\text{in }\Omega,\\
					\phi & = g&&\text{on }\Gamma_0,\\
					\frac{\partial\phi}{\partial {\bn}} & = -{\bn}\times{\bu} &&\text{on }\Gamma_1.			
		\end{aligned}
	\right.
	\label{system:streams}
\end{align}

Indeed, by multiplying a test function $\phi \in \mathcal{W}(\Omega;\mathbb{R}) $ to the first equation in \eqref{system:streams} and utilizing Green's first and curl \cite[Theorem 3.29]{monk2003} identities, we get
\begin{align*}
	\int_{\Omega}\nabla\psi\cdot\nabla\phi \du x - \int_{\Gamma_0}\frac{\partial\psi}{\partial{\bn}}\phi\du s = \int_{\Omega} {\bu}\cdot(\nabla\times \phi) \du x + \int_{\Gamma_0} ({\bn}\times{\bu})\phi \du s.
\end{align*}
By using the boundary condition on $\Gamma_1$ from \eqref{system:streams}, the boundary integrals in the equation above cancels out, therefore gives us \eqref{weak:streams}.

\bibliographystyle{siamplain}
\bibliography{report}

\end{document}